\newcommand{\R}{{\mathbb R}}
\newcommand{\N}{{\mathbb N}}
\newcommand{\F}{{\mathbb F}}
\newcommand{\E}{{\rm E}}
\newcommand{\ET}{{\rm ET}}
\newcommand{\cE}{{\mathcal E}}
\newcommand{\cM}{{\mathcal M}}
\newcommand{\cP}{{\mathcal P}}
\newcommand{\cF}{{\mathcal F}}
\newcommand{\minus}{\backslash}
\newcommand{\Opt}{{\rm Opt}}
\newtheorem{thm}{Theorem}[section]
\newtheorem{cor}[thm]{Corollary}
\newtheorem{lem}[thm]{Lemma}
\newtheorem{definition}[thm]{Definition}
\newtheorem{example}[thm]{Example}
\newtheorem{remark}[thm]{Remark}
\newtheorem{proposition}[thm]{Proposition}
\numberwithin{equation}{section}
\theoremstyle{definition}
\keywords{optimal transport, generalized Wasserstein space, barycenter, duality}
\subjclass[2010]{49J40, 49K21, 	49K30}
\begin{document}
\title[Weak optimal total variation transport problems]{Weak optimal total variation transport problems and generalized Wasserstein barycenters}

\author{Nhan-Phu Chung}
\address{Nhan-Phu Chung, Department of Mathematics, Sungkyunkwan University, 2066 Seobu-ro, Jangan-gu, Suwon-si, Gyeonggi-do, Korea 16419.} 
\email{phuchung@skku.edu;phuchung82@gmail.com} 
\author{ Thanh-Son Trinh}
\address{Thanh-Son Trinh, Department of Mathematics, Sungkyunkwan University, 2066 Seobu-ro, Jangan-gu, Suwon-si, Gyeonggi-do, Korea 16419.}
\email{sontrinh@skku.edu}
\date{\today}
\maketitle
\begin{abstract}
In this paper, we establish a Kantorovich duality for weak optimal total variation transport problems. As consequences, we recover a version of duality formula for partial optimal transports established by Caffarelli and McCann; and we also get another proof of Kantorovich-Rubinstein Theorem for generalized Wasserstein distance $\widetilde{W}_1^{a,b}$ proved before by Piccoli and Rossi. Then we apply our duality formula to study generalized Wasserstein barycenters. We show the existence of these barycenters for measures with compact supports. Finally, we prove the consistency of our barycenters.      
\end{abstract}
\section{Introduction}
The classical Monge-Kantorovich's optimal transport problem has been studied extensively after pioneer works of Kantorovich on 1940s \cite{Kant42,Kant48}. In connection with this problem, Wassertein distances in the space of probability measures are powerful tools to study gradient flows and partial differential equations \cite{AGS} and theory of Ricci curvature bounded below for general metric-measure spaces \cite{LV,Sturm}.

In the 2010s, various of generalizations of classical optimal transport problems and Wasserstein distances have been introduced and investigated by numerous authors \cite{ABC,BBP,CM,CPSV,GRST,KMV,Liero, PR14, PR16}. Recently, in 2021 we introduced weak optimal entropy problems \cite{CT3} which cover both optimal entropy transport problems in \cite{Liero} and weak optimal transport problems in \cite{GRST}. In \cite{CT3}, under certain conditions of entropy functionals we establish a Kantorovich duality for our weak optimal transport problem. Before stating our first main result, let us review the  weak optimal entropy problems.

Given a metric space $X$, we denote by $\cM(X)$ and $\cP(X)$ the spaces of all Borel nonnegative finite measures and probabilities measures on $X$, respectively. Let $X_1,X_2$ be Polish metric spaces and let $C:X_1\times \cP(X_2)\to [0,\infty]$ be a lower semi-continuous function satisfying that $C(x_1,\cdot)$ is convex for every $x_1\in X_1$. For every $\boldsymbol{\gamma}\in \cM(X_1\times X_2)$, we denote $(\gamma_{x_1})_{x_1\in X_1}$ its disintegration with respect to its first marginal. 
Let $F_i:[0,\infty)\to [0,\infty]$, $i=1,2$ be convex, lower semi-continuous entropy functions with their recession constants $(F_i)'_\infty:=\lim_{s\to \infty}\dfrac{F_i(s)}{s}$. Given $\mu_1\in \cM(X_1), \mu_2\in \cM(X_2)$ and $\boldsymbol{\gamma}\in \cM(X_1\times X_2)$, we define
\begin{align*}
\cF_i(\gamma_i|\mu_i):&=\int_{X_i}F_i(f_i(x_i))d\mu_i(x_i)+(F_i)'_\infty \gamma_i^\perp(X),\\
\cE(\boldsymbol{\gamma}|\mu_1,\mu_2):&=\sum_{i=1}^2\cF_i(\gamma_i|\mu_i)+\int_{X_1}C(x_1,\gamma_{x_1})d\gamma_1(x_1),
\end{align*}   
where $\gamma_1,\gamma_2$ are the first and second marginal of $\boldsymbol{\gamma}$, and $\gamma_i=f_i\mu+\gamma_i^\perp$ is the Lebesgue decomposition of $\gamma_i$ with respect to $\mu_i$.   

Our weak optimal entropy transport problem is defined as
\begin{align}
\label{P-primal}
\cE(\mu_1,\mu_2):=\inf_{\boldsymbol{\gamma}\in \cM(X_1\times X_2)} \cE(\boldsymbol{\gamma}|\mu_1,\mu_2). 
\end{align}
Similarly to optimal entropy transport problems in \cite{Liero}, to handle with problem \eqref{P-primal} we often assume that $F_i$ is superlinear, i.e. $(F_i)'_\infty=+\infty$ for $i=1,2$. This assumption makes the problems easier as we can get rid of the part $(F_i)'_\infty \gamma_i^\perp(X)$ in the expression of $\cF_i$.

In the first part of the paper, we investigate problem \eqref{P-primal} for a special case that $ F_i$ is not superlinear, $i=1,2$. Given $a,b>0$, we consider the total variation entropy function $F_i(s):=a|s-1|$, $i=1,2$ and the cost function $b\cdot C$. In this case, problem \eqref{P-primal} will become
\begin{align}
\label{P-primal1}
\E^{a,b}(\mu_1,\mu_2):=\inf_{\boldsymbol{\gamma}\in \cM(X_1\times X_2)}\E^{a,b}(\boldsymbol{\gamma}|\mu_1,\mu_2),
\end{align}
where $\E^{a,b}\left(\boldsymbol{\gamma}|\mu_1,\mu_2\right):=a\left\vert \mu_1-\gamma_1 \right\vert+a\left\vert \mu_2-\gamma_2 \right\vert+b\int_{X_1}C(x_1,\gamma_{x_1})d\gamma_1(x_1).$

As $F_i$ is not superlinear, to deal with problem \eqref{P-primal1} we need techniques being different from \cite{CT3} and \cite{Liero}. We define
\begin{align*}
\Phi_I:=\bigg\{&(\varphi_1,\varphi_2)\in C_b(X_1)\times C_b(X_2):\varphi_1(x_1),\varphi_2(x_2)\geq -a\text{ for every } x_i\in X_i,i=1,2\\
&\text{and }\varphi_1(x_1)+q(\varphi_2)\leq b\cdot C(x_1,q) \text{ for every }x_1\in X_1, q\in\cP(X_2)\bigg\}.
\end{align*}
Next, we define the functional $J:\mathbb{R}\rightarrow (-\infty,+\infty]$ by 
\begin{align}\label{F-definition of J}
J(\phi)=\sup_{s>0}\frac{\phi-a\vert 1-s\vert}{s}=\left\lbrace\begin{array}{ll}
+\infty &\text{ if }\phi>a,\\
\phi &\text{ if }-a\leq \phi\leq a,\\
-a &\text{ otherwise}.
\end{array}\right.
\end{align}
Then we define 
\begin{align*}
\Phi_J:=\bigg\{&(\varphi_1,\varphi_2)\in C_b(X_1)\times C_b(X_2):\varphi_1(x_1),\varphi_2(x_2)\leq a\text{ for every } x_i\in X_i,i=1,2\\
&\text{and }J(\varphi_1(x_1))+q(J(\varphi_2))\leq b\cdot C(x_1,q) \text{ for every }x_1\in X_1, q\in\cP(X_2)\bigg\}.
\end{align*}
Our main result for the first part is a Kantorovich duality of problem \eqref{P-primal1}.
\begin{thm}\label{T-duality for weak cost function}
Let $X_1, X_2$ be locally compact, Polish metric spaces. Let $C:X_1\times \cP(X_2)\to [0,\infty]$ be a lower semi-continuous function such that $C(x_1,\cdot)$ is convex for every $x_1\in X_1$. Then for every $\mu_i\in \cM(X_i),i=1,2$ we have 
\begin{align*}
\E^{a,b}(\mu_1,\mu_2)&=\sup_{(\varphi_1,\varphi_2)\in \Phi_I}\sum_{i=1}^2\int_{X_i}I(\varphi_i(x_i))d\mu_i(x_i)\\
&=\sup_{(\varphi_1,\varphi_2)\in \Phi_J}\sum_{i=1}^2\int_{X_i}\varphi_i(x_i)d\mu_i(x_i),
\end{align*}
where \begin{align}\label{F-definition of I}
I(\varphi):=\inf_{s\geq 0}\left(s\varphi+a\vert 1-s\vert\right)= \left\lbrace\begin{array}{cl}
a &\text{ if } \varphi>a\\
\varphi &\text{ if } -a\leq \varphi\leq a\\
-\infty &\text{ otherwise }
\end{array}\right..
\end{align}
\end{thm}
Now we present consequences of Theorem \ref{T-duality for weak cost function}. The first one is that we can get a version of \cite[Corollary 2.6]{CM}. Let $X_1=X_2=X$ be a Polish space, $\mu_1,\mu_2\in \cM(X)$, $a,b>0$ and $c_1: X\times X\to [0,+\infty]$ be a lower semi-continuous function. Attach an isolated point $\hat{\infty}$ to $X$, and extend the cost function
\begin{align}\label{F-definition of extending function}
\hat{c}_1(x,y):= \left\lbrace\begin{array}{cl}
b\cdot c_1(x,y) &\text{ if } x\neq \hat{\infty} \mbox{ and } y \neq \hat{\infty}, \\
a &\text{ if }x\in X, y=\hat{\infty}\text{ or }x=\hat{\infty},y\in X,\\
0&\text{ otherwise, }
\end{array}\right.
\end{align}
and measures $\mu_1,\mu_2$ to $\hat{X}:=X\cup \{\hat{\infty}\}$ by adding a Dirac measure at infinity: $\hat{\mu}_1:=\mu_1+\vert\mu_2\vert\delta_{\hat{\infty}}$, $\hat{\mu}_2:=\mu_2+\vert\mu_1\vert\delta_{\hat{\infty}}$. Then the measures  $\hat{\mu}_1$ and $ \hat{\mu}_2$ have the same masses. We define 
$$\Gamma(\ \hat{\mu}_1,  \hat{\mu}_2):=\bigg\{\hat{\boldsymbol{\gamma}}\in \cM(\hat{X}\times \hat{X}):\hat{\boldsymbol{\gamma}}(A\times \hat{X})=\hat{\mu}_1(A),\hat{\boldsymbol{\gamma}}(\hat{X}\times A )=\hat{\mu}_2(A) \mbox{ for Borel } A\subset \hat{X}\bigg\}.$$
Then we will get a version of \cite[Corollary 2.6]{CM} as follows.
\begin{cor}
\label{C-CM}
Given a locally compact, Polish metric space $X$, $\mu_1,\mu_2\in \cM(X)$, $a,b>0$, and a lower semi-continuous function $c_1: X\times X\to [0,+\infty]$. Then
$$\sup_{\substack{(\hat{\varphi}_1,\hat{\varphi}_2)\in L^1(\hat{\mu}_1)\times L^1(\hat{\mu}_2)\\\hat{\varphi}_1(x)+\hat{\varphi}_2(y)\leq \hat{c}_1(x,y)}}\sum_{i=1}^2\int_{\hat{X}}\hat{\varphi}_i(x)d\hat{\mu_i}(x)= \inf_{\hat{\boldsymbol{\gamma}}\in \Gamma(\hat{\mu}_1,\hat{\mu}_2)}\int_{\hat{X}\times \hat{X}}\hat{c}_1(x,y)d\hat{\boldsymbol{\gamma}}(x,y).$$
\end{cor}

Another consequence of Theorem \ref{T-duality for weak cost function} is that we establish a Kantorovich duality for generalized Wasserstein distance $\widetilde{W}_p^{a,b}$, and a version of Kantorovich-Rubinstein Theorem for generalized Wasserstein distance $\widetilde{W}_1^{a,b}$.  

Let $(X,d)$ be a metric space. For a function $f:X\to \R$, we denote 
$$\|f\|_{Lip}:=\sup_{x,y\in X,x\neq y}\frac{|f(x)-f(y)|}{d(x,y)}.$$
\begin{cor}
\label{C-flat metrics}
Let $(X,d)$ be a locally compact and Polish metric space. Then for every $a,b>0,  \mu,\nu\in \cM(X)$ and $p\geq 1$ we have 
\begin{enumerate}
\item $\widetilde{W}^{a,b}_p(\mu,\nu)^p=\sup\limits_{(\varphi_1,\varphi_2)\in\Phi_W}\bigg\{ \int_X I\left(\varphi_1(x)\right) d\mu(x)+\int_X I\left(\varphi_2(x)\right) d\nu(x)\bigg\},$
where \begin{align*}
\Phi_W:= \{(\varphi_1,\varphi_2)\in C_b(X)\times C_b(X)\;\vert\; \varphi_1(x)+\varphi_2 (y)\leq (b\cdot d(x,y))^p\text{ and }\\
\varphi_1 (x), \varphi _2(y)\geq -a,\;\forall x,y\in X\}.
\end{align*}
\item $\widetilde{W}_1^{a,b}(\mu,\nu)=\sup\bigg\{\int_X fd(\mu-\nu):f\in \F\bigg\},$
where $$\F:=\big\{f\in C_b(X), \|f\|_\infty\leq a, \|f\|_{Lip}\leq b\big\}.$$
\end{enumerate}
\end{cor} 
Note that Corollary \ref{C-flat metrics} (1) is proved for the case $p=1$ in \cite{CT}, and Corollary \ref{C-flat metrics} (2) is a main result of \cite{PR16} proved by a different method there. 	
	
In the second part of the paper, we apply Theorem \ref{T-duality for weak cost function} to study barycenters of generalized Wasserstein distances. In 2002, Sturm investigated barycenters in nonpositive curvature spaces as he showed the existence, uniqueness and contraction of barycenters in such spaces \cite{Sturm}. Because Wassertein spaces are not in the framework of nonpositive curvature spaces, to study the existence, uniqueness and properties of Wasserstein barycenters over $\R^n$, Agueh and Carlier introduced dual problems of the primal barycenter problem and used convex analysis to handle them \cite{Agueh}. It turned out that Wasserstein barycenters have applications in other fields which we only mentioned a few such as computer science, economic theory,...\cite{MR3469435,MR3862415, MR3423268}. These barycenters are also investigated further for Wasserstein spaces of Riemannian manifolds \cite{MR3590527} and locally compact geodesic spaces \cite{MR3663634}. Recently, barycenters in Hellinger-Kantorovich spaces, siblings of Wasserstein spaces, have been investigated in \cite{CP,FMS}.  

On the other hand, in 2014, Piccoli and Rossi introduced generalized Wasserstein distances \cite{PR14} and established a duality Kantorovich-Rubinstein formula and a generalized Benamou-Breiner formula for them \cite{PR16}. Combining Theorem \ref{T-duality for weak cost function} with the streamline of Agueh and Carlier's work, we study the existence and consistency of generalized Wasserstein barycenters.

More precisely, first we show the existence of generalized Wasserstein barycenters whenever starting measures have compact supports. Secondly, we introduce and investigate a dual problem of the barycenter problem. Although our barycenters are not unique, we still can establish their consistency as Boissard, Le Gouic and Loubes did in the Wasserstein case \cite{Boissard}.  
 
Our paper is organized as follows. In section 2, we review basic notations and generalized Wasserstein distances $\widetilde{W}^{a,b}_p$. In section 3, we prove Theorem \ref{T-duality for weak cost function}, Corollaries \ref{C-CM} and \ref{C-flat metrics}. In section 4, we study our primal barycenter problem and its dual problems. We also show the existence and consistency of generalized Wasserstein barycenters in this last section.

	
	\textbf{Acknowledgements:} Part of this paper was carried out when N. P. Chung visited University of Science, Vietnam National University at Hochiminh city on summer 2019. He is grateful math department there for its warm hospitality. The authors were partially supported by the National Research Foundation of Korea (NRF) grants funded by the Korea government No. NRF- 2016R1A5A1008055 , No. NRF-2016R1D1A1B03931922 and No. NRF-2019R1C1C1007107.     	  	 
\section{Preliminaries}
Let $(X,d) $ be a metric space. We denote by $\mathcal{M}(X)$ and $\mathcal{P}(X)$ the sets of all nonnegative Borel measures with finite mass and all probability Borel measures, respectively.

Given a Borel measure $\mu$, we denote its mass by $\vert \mu\vert :=\mu (X)$. In the general case, if $\mu=\mu^+-\mu^-$ is a signed Borel measure then $\vert \mu\vert :=\vert \mu^+\vert+\vert \mu^-\vert$. A set $M\subset \mathcal{M}(X)$ is bounded if $\sup_{\mu\in M}\vert \mu\vert <\infty$, and it is \textit{tight} if for every $\varepsilon>0$, there exists a compact subset $K_\varepsilon$ of $X$ such that for all $\mu\in M$, we have $\mu\left(X\backslash K_\varepsilon\right)\leq \varepsilon$.

For every $\mu_1,\mu_2\in \mathcal{M}(X)$, we say that $\mu_1$ is absolutely continuous with respect to $\mu_2$ and write $\mu_1 \ll \mu_2$ if $\mu_2(A)=0$ yields $\mu_1(A)=0$ for every Borel subset $A$ of $X$. We call that $\mu_1$ and $\mu_2$ are mutually singular and write $\mu_1 \perp \mu_2$ if there exists a Borel subset $B$ of $X$ such that $\mu_1(B)=\mu_2(X\backslash B)=0$. We write $\mu_1\leq \mu_2$ if for all Borel subset $A$ of $X$ we have $\mu_1(A)\leq \mu_2(A)$.

For every $p\geq 1$, we denote by $\mathcal{M}_p(X)$ (reps. $\mathcal{P}_p (X)$) the space of all measures $\mu\in  \mathcal{M}(X)$ (reps. $\mathcal{P}(X)$) with finite $p$-moment, i.e. there is some (and therefore any) $x_0\in X$ such that $$\int_{X}d^p\left(x,x_0\right)d\mu(x)<\infty.$$

For every measures $\mu_1,\mu_2\in \mathcal{M}(X)$, a Borel probability measure $\boldsymbol{\pi}$ on $X\times X$ is called a transference plan between $\mu_1$ and $\mu_2$ if $$\vert \mu_1\vert \boldsymbol{\pi} (A\times X)=\mu_1(A)\text{ and }\vert \mu_2\vert \boldsymbol{\pi} (X\times B)=\mu_2(B),$$
for every Borel subsets $A,B$ of $X$. We denote the set of all transference plan between $\mu_1$ and $\mu_2$ by $\Pi(\mu_1,\mu_2)$.

 Given measures $\mu_1,\mu_2\in\mathcal{M}_p(X)$ with the same mass, i.e. $\vert \mu_1\vert=\vert \mu_2\vert$. The Wasserstein distance between $\mu_1$ and $\mu_2$ is defined by $$W_p(\mu_1,\mu_2):=\left(\vert \mu_1\vert\inf_{\pi\in \Pi (\mu_1,\mu_2)}\int_{X\times X}d^p(x,y)d\boldsymbol{\pi}(x,y)\right)^{1/p}.$$
For each $\mu_1,\mu_2\in\mathcal{M}(X)$ with $|\mu_1|=|\mu_2|$, we denote by $\Opt_p(\mu_1,\mu_2)$ the set of all $\boldsymbol{\pi}\in\Pi(\mu_1,\mu_2)$ such that $W^p_p(\mu_1,\mu_2)=\vert \mu_1\vert \int_{X\times X}d^p(x,y)d\boldsymbol{\pi}(x,y)$. If $(X,d)$ is a Polish metric space, i.e. $(X,d)$ is complete and separable then $\Opt_p(\mu_1,\mu_2)$ is nonempty \cite[Theorem 1.3]{V03}.

\begin{thm} (Prokhorov's theorem)
If $(X,d)$ is a Polish metric space then a subset $M\subset \cM(X)$ is bounded and \textit{tight} if and only if $M$ is relatively compact under the weak*- topology. 
\end{thm}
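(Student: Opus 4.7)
The plan is to run the classical Prokhorov argument, adapted to finite (not just probability) measures. Given a sequence $(\mu_n)\subset M$, boundedness of $M$ gives $|\mu_n|\leq C$; by Bolzano--Weierstrass pass to a subsequence along which $|\mu_n|\to c\in[0,C]$. If $c=0$ then $\mu_n\to 0$ in weak* and we are done; otherwise replace $\mu_n$ by $\nu_n:=\mu_n/|\mu_n|\in\cP(X)$. The family $(\nu_n)$ remains tight, so it suffices to find a weak* convergent subsequence of $(\nu_n)$.

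Next, exploit tightness: choose compact $K_j\subset X$ with $\nu_n(X\setminus K_j)\leq 1/j$ for all $n$, and replace $K_j$ by $\bigcup_{i\le j}K_i$ so that $K_1\subset K_2\subset\cdots$. Each $K_j$ is a compact metric space, so $C(K_j)$ is separable and by Banach--Alaoglu the closed unit ball of $\cM(K_j)\cong C(K_j)^*$ is weak* compact and metrizable. The restrictions $\nu_n|_{K_j}$ lie in this ball, so a diagonal extraction yields a single subsequence (still denoted $\nu_n$) such that $\nu_n|_{K_j}\to\sigma_j$ weakly on $K_j$ for every $j$, and testing against the constant $1$ yields $\sigma_j(K_j)\geq 1-1/j$. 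Viewing each $\sigma_j$ as a measure on $X$ supported in $K_j$, for any nonnegative $f\in C_b(X)$ the identity
\begin{equation*}
\int_X f\,d\sigma_{j+1}-\int_X f\,d\sigma_j \;=\; \lim_n \int_{K_{j+1}\setminus K_j} f\,d\nu_n \;\geq\; 0
\end{equation*}
combined with outer regularity forces $\sigma_j\leq\sigma_{j+1}$; the monotone limit $\nu:=\sup_j\sigma_j$ is then a Borel measure on $X$ with $\nu(X)=\lim_j\sigma_j(K_j)=1$.

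Finally, weak* convergence $\nu_n\to\nu$ will follow from the three-piece estimate
\begin{equation*}
\Bigl|\int_X f\,d\nu_n-\int_X f\,d\nu\Bigr|\leq \Bigl|\int_{K_j} f\,d\nu_n-\int_{K_j} f\,d\sigma_j\Bigr|+\frac{\|f\|_\infty}{j}+\|f\|_\infty\bigl(1-\sigma_j(K_j)\bigr),
\end{equation*}
obtained by splitting $\int_X f\,d\nu_n$ and $\int_X f\,d\nu$ around $K_j$ and $X\setminus K_j$, by sending $n\to\infty$ (kills the first term) and then $j\to\infty$ (kills the last two). The step I expect to be the main obstacle is precisely the monotone gluing: on a generic metric space one cannot appeal directly to Riesz--Markov on $X$, so $\nu$ must be assembled from the $\sigma_j$'s by hand, and the compatibility $\sigma_j\leq\sigma_{j+1}$ has to be read off from weak* convergence on each compact piece. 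One must be careful to route this through continuous test functions rather than the inadmissible $1_{K_j}$ (which is merely upper semi-continuous on $K_{j+1}$), essentially using the uniform tightness bound $\nu_n(X\setminus K_j)\leq 1/j$ to control possible escape of mass between the pieces.
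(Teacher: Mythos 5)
The paper does not prove this statement: it is quoted as the classical Prokhorov theorem and used only to extract weak\textsuperscript{*}-convergent subsequences, so there is no in-paper argument to compare against. Your proposal is the standard textbook proof adapted to finite measures, and it is essentially correct: the normalization to $\nu_n=\mu_n/|\mu_n|$ (legitimate once $|\mu_n|\to c>0$, which also gives tightness of the $\nu_n$ via $\nu_n(X\setminus K_\varepsilon)\leq 2\varepsilon/c$ for large $n$), the diagonal extraction over the nested compacts using Banach--Alaoglu and separability of $C(K_j)$, the monotone gluing $\sigma_j\leq\sigma_{j+1}$ (your integral inequality for nonnegative $f\in C_b(X)$ upgrades to an inequality of measures by inner regularity of finite Borel measures on metric spaces, applied to closed sets), and the three-piece estimate all check out; in particular the third term is controlled exactly by $\|f\|_\infty(\nu(X)-\sigma_j(X))=\|f\|_\infty(1-\sigma_j(K_j))$ since $\sigma_j\leq\nu$. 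Two small points worth making explicit if you write this up: first, what you prove is sequential relative compactness, which is all the paper ever invokes, but on a non-separable $X$ the weak\textsuperscript{*} topology on bounded sets of $\cM(X)$ need not be metrizable, so equating this with topological relative compactness deserves a remark; second, you should confirm that the weak\textsuperscript{*} limit $\sigma_j$ of the positive functionals $\nu_n|_{K_j}$ is itself a positive functional, hence a nonnegative Borel measure on $K_j$ by Riesz representation, before treating it as a measure on $X$.
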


We now review the definitions of the generalized Wasserstein distances. They were introduced by Piccoli and Rossi in \cite{PR14,PR16}. For convenience to establish Kantorovich duality formulas for generalized Wasserstein distances, we adapt slightly the original ones.
\begin{definition}
Let $X$ be a Polish metric space and let $a,b>0,p\geq 1$. For every $\mu_1,\mu_2\in \mathcal{M}(X)$, the generalized Wasserstein distance $\widetilde{W}^{a,b}_p$ between $\mu_1$ and $\mu_2$ is defined by $$\widetilde{W}^{a,b}_p (\mu_1,\mu_2):=\left(\inf\left\{C\left(\widetilde{\mu_1}, \widetilde{\mu_2}\right)|\, \widetilde{\mu_1},\widetilde{\mu_2}\in \mathcal{M}_p(X),\vert \widetilde{\mu_1}\vert =\vert \widetilde{\mu_2}\vert \right\}\right)^{1/p},$$ 
where $C\left(\widetilde{\mu_1}, \widetilde{\mu_2}\right)= a\left\vert \mu_1-\widetilde{\mu_1}\right\vert+a\left\vert \mu_2-\widetilde{\mu_2}\right\vert+b^p\,W_p^p\left(\widetilde{\mu_1},\widetilde{\mu_2}\right).$
\end{definition}
The following results can be adapted from the proofs of \cite[Proposition 1 and Theorem 3]{PR14}.
\begin{proposition}
\label{P-less measures for general Wassertein spaces}
(\cite[Proposition 1]{PR14})
If $X$ is a Polish metric space then $\left(\mathcal{M}(X), \widetilde{W}^{a,b}_p\right)$ is a metric space. Moreover, there exists $\widetilde{\mu_1},\widetilde{\mu_2}\in \mathcal{M}_p(X)$ such that $\vert \widetilde{\mu_1}\vert = \vert \widetilde{\mu_2}\vert, \widetilde{\mu_1}\leq \mu_1, \widetilde{\mu_2}\leq \mu_2$ and $\widetilde{W}^{a,b}_p (\mu_1,\mu_2)^p =  C\left(\widetilde{\mu_1}, \widetilde{\mu_2}\right)$.
\end{proposition}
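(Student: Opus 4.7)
The plan is to first establish the attainment of the infimum together with the domination $\widetilde{\mu_i}\le\mu_i$, and then derive the metric axioms from this stronger assertion. I would carry this out in three steps: a reduction step showing the domination constraint is costless, a compactness plus lower semicontinuity argument producing the minimizer, and finally the verification of the distance axioms.

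\emph{Reduction.} Given an admissible pair $(\widetilde{\mu_1},\widetilde{\mu_2})$ with $|\widetilde{\mu_1}|=|\widetilde{\mu_2}|$, I would construct $(\widetilde{\nu_1},\widetilde{\nu_2})$ with $\widetilde{\nu_i}\le\mu_i$, $|\widetilde{\nu_1}|=|\widetilde{\nu_2}|$ and $C(\widetilde{\nu_1},\widetilde{\nu_2})\le C(\widetilde{\mu_1},\widetilde{\mu_2})$. Fix a $W_p$-optimal (unnormalised) transport $\bar\pi$ between $\widetilde{\mu_1}$ and $\widetilde{\mu_2}$, set $f_i:=d(\widetilde{\mu_i}\wedge\mu_i)/d\widetilde{\mu_i}\in[0,1]$, and consider the sub-plan $d\pi'(x,y):=\min(f_1(x),f_2(y))\,d\bar\pi(x,y)$, whose marginals I call $\widetilde{\nu_i}$. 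Then $\widetilde{\nu_i}\le f_i\widetilde{\mu_i}=\widetilde{\mu_i}\wedge\mu_i\le\mu_i$, the two marginal masses agree, and $\pi'\le\bar\pi$ immediately gives $W_p^p(\widetilde{\nu_1},\widetilde{\nu_2})\le W_p^p(\widetilde{\mu_1},\widetilde{\mu_2})$. The change in the two mass penalties collapses, via the identity $f_1+f_2-\min(f_1,f_2)=\max(f_1,f_2)\le 1$, to the non-positive quantity $2a\int(\max(f_1,f_2)-1)\,d\bar\pi$, so $C$ does not increase.

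\emph{Attainment.} Starting from a minimizing sequence $(\widetilde{\mu_1}^n,\widetilde{\mu_2}^n)$ which by the previous step may be taken to satisfy $\widetilde{\mu_i}^n\le\mu_i$, I would exploit that each $\mu_i$ is a finite Borel measure on a Polish space, hence tight by Ulam's theorem. The bound $\widetilde{\mu_i}^n\le\mu_i$ transfers tightness and mass boundedness to the sequence, so Prokhorov yields a subsequence converging narrowly to some $\widetilde{\mu_i}\le\mu_i$ (the domination passing to the limit by portmanteau). Narrow convergence forces $|\widetilde{\mu_i}^n|\to|\widetilde{\mu_i}|$, so both mass terms of $C$ converge, while $W_p^p$ is lower semicontinuous under narrow convergence of marginals with convergent masses (extract a weak limit of the optimal probability plans in $\mathcal{P}(X\times X)$). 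Hence the infimum is attained by $(\widetilde{\mu_1},\widetilde{\mu_2})$ with $\widetilde{\mu_i}\le\mu_i$.

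\emph{Metric axioms and main obstacle.} Symmetry and non-negativity are immediate. If $\widetilde{W}_p^{a,b}(\mu_1,\mu_2)=0$, the dominated minimizer above forces $|\widetilde{\mu_i}|=|\mu_i|$ (hence $\widetilde{\mu_i}=\mu_i$) and $W_p(\widetilde{\mu_1},\widetilde{\mu_2})=0$, so $\mu_1=\mu_2$; the equality $\widetilde{W}_p^{a,b}(\mu,\mu)=0$ follows by inserting $\widetilde{\mu}=\mu|_{K_\varepsilon}$ from a tight exhaustion of $\mu$. For the triangle inequality I would take optimizers $(\widetilde{\mu_1},\widetilde{\mu_2}^{12})$ and $(\widetilde{\mu_2}^{23},\widetilde{\mu_3})$ for the two subdistances, reduce them to share the middle marginal $\widetilde{\mu_2}^{12}\wedge\widetilde{\mu_2}^{23}$ by the sub-plan construction of Step 1, glue the resulting transport plans in $X\times X\times X$, and combine the $L^p$ triangle inequality for $W_p$ with the ordinary triangle inequality for total variations. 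The main obstacle is the reduction step: a naive truncation $\widetilde{\mu_i}\mapsto\widetilde{\mu_i}\wedge\mu_i$ breaks the equal-mass constraint and can even inflate the mass penalty; only the $\min(f_1(x),f_2(y))$ sub-plan makes the penalty algebra collapse to something non-positive while simultaneously pushing both marginals under $\mu_i$.
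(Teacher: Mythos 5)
The paper never actually proves this proposition: it is imported from \cite[Proposition 1]{PR14} with the remark that the argument ``can be adapted'', and the only in-paper material touching it is the entropy-transport machinery of Section 3 --- Lemma \ref{L-identity for entropy of smaller measures} for the reduction to dominated marginals and Lemma \ref{L-existence of optimal plan for Entropy-Transport} (itself quoted from \cite{CT}) for attainment. Your argument is a correct, self-contained proof along the same general lines, with one genuine structural difference in the reduction step: Lemma \ref{L-identity for entropy of smaller measures} lowers the two marginals \emph{sequentially} (multiply the plan by a weight $g(x)$ depending on the first variable only, so that the first marginal drops to $\min\{f,1\}\mu_1\le\mu_1$, then repeat in the second variable), whereas you lower both at once with the weight $\min(f_1(x),f_2(y))$; your bookkeeping identity $f_1+f_2-\min(f_1,f_2)=\max(f_1,f_2)\le 1$ is exactly what makes the symmetric version close, and I checked that the resulting change in the penalties is indeed $2a\int(\max(f_1,f_2)-1)\,d\bar\pi\le 0$. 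Both reductions are valid; the sequential one perturbs only one penalty term per step and is easier to verify line by line, the symmetric one gives the comparison in a single inequality. Your compactness/lower-semicontinuity step and the combination of the mass and $W_p$ triangle inequalities via the $\ell^p$-norm on $\R^2$ are the standard ingredients of \cite{PR14}.

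One point needs tightening. The admissible class in the definition requires $\widetilde{\mu_i}\in\mathcal{M}_p(X)$, but your weak* limit is only known to satisfy $\widetilde{\mu_i}\le\mu_i$, and domination by a finite measure does not yield a finite $p$-moment. Indeed, for $\mu_1=\mu_2=\mu\in\mathcal{M}(X)\setminus\mathcal{M}_p(X)$ the infimum is $0$ but is not attained by any pair in $\mathcal{M}_p(X)\times\mathcal{M}_p(X)$, so the attainment claim needs either a finite-moment hypothesis on the $\mu_i$ or an enlargement of the admissible class to all dominated finite-mass pairs with finite transport cost (which is how the functional $E_c$ of Section 3 is set up, and which your proof actually establishes). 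This is a defect of the statement as transcribed rather than of your argument, but your write-up should either flag it or add the truncation-to-compacta step showing the two infima coincide.
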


If measures $\widetilde{\mu_1}, \widetilde{\mu_2}\in \mathcal{M}_p(X)$ have the same mass such that $\widetilde{W}^{a,b}_p (\mu_1,\mu_2)^p =  C\left(\widetilde{\mu_1}, \widetilde{\mu_2}\right)$ then we say that $\left(\widetilde{\mu_1}, \widetilde{\mu_2}\right)$ is an optimal for $\widetilde{W}^{a,b}_p (\mu_1,\mu_2)$.

Let $X_1,X_2$ be Polish metric spaces. For every $\boldsymbol{\gamma}\in \cM(X_1\times X_2)$, we denote its disintegration with respect to its first marginal by $(\gamma_{x_1})_{x_1\in X_1}$. We also denote by $\gamma_1$ and $\gamma_2$ be the first and second marginals of $\boldsymbol{\gamma}$, i.e. 
$$\gamma_1(B_1)=\boldsymbol{\gamma}(B_1\times X_2) \mbox{ and } \gamma_2(B_2)=\boldsymbol{\gamma}(X_1\times B_2) \mbox{ for Borel sets } B_i\subset X_i.$$	
\section{Weak optimal total variation transport problems}

Let $C:X_1\times \cP(X_2)\to [0,\infty]$ be a lower semi-continuous function such that for every $x_1\in X_1$ we have 
	$$C(x_1,tq_1+(1-t)q_2)\leq tC(x_1,q_1)+(1-t)C(x_1,q_2),$$
	for every $t\in [0,1], q_1, q_2\in \cP(X_2)$.

For every $a,b>0,\mu_i\in\cM (X_i),i=1,2$ and every $\boldsymbol{\gamma}\in \cM(X_1\times X_2)$, we recall 
$$\E^{a,b}\left(\boldsymbol{\gamma}|\mu_1,\mu_2\right):=a\left\vert \mu_1-\gamma_1 \right\vert+a\left\vert \mu_2-\gamma_2 \right\vert+b\int_{X_1}C(x_1,\gamma_{x_1})d\gamma_1(x_1)$$
Then for every $\mu_i\in\cM(X_i),i=1,2$ we have $$\E^{a,b}(\mu_1,\mu_2):=\inf_{\boldsymbol{\gamma}\in\cM(X_1\times X_2)}\E^{a,b}(\boldsymbol{\gamma}|\mu_1,\mu_2)=\inf_{\boldsymbol{\gamma}\in M}\E^{a,b}(\boldsymbol{\gamma}|\mu_1,\mu_2),$$
where $M:=\{\boldsymbol{\gamma}\in\cM(X_1 \times X_2)|\int_XC(x_1,\gamma_{x_1})d\gamma_1(x_1)<\infty\}.$

\begin{lem}\label{L-inf in M=inf in M^<}
Let $X_1, X_2$ be Polish metric spaces and $a,b>0$. For every $\mu_1\in\cM(X_1)$ and $\mu_2\in \cM(X_2)$ we have $$\E^{a,b}(\mu_1,\mu_2)=\inf_{\boldsymbol{\gamma}\in M}\E^{a,b}(\boldsymbol{\gamma}|\mu_1,\mu_2)=\inf_{\boldsymbol{\gamma}\in M^\leq (\mu_1,\mu_2)}\E^{a,b}(\boldsymbol{\gamma}|\mu_1,\mu_2),$$
where $M^\leq (\mu_1,\mu_2):=\{\boldsymbol{\gamma}\in M|\gamma_i\leq \mu_i,i=1,2\}.$
\end{lem}
\begin{proof}
It is clear that we only need to prove that $$\inf_{\boldsymbol{\gamma}\in M}\E^{a,b}\left(\boldsymbol{\gamma}|\mu_1,\mu_2\right)\geq \inf_{\boldsymbol{\gamma}\in M^\leq \left(\mu_1,\mu_2\right)}\E^{a,b}\left(\boldsymbol{\gamma}|\mu_1,\mu_2\right).$$
For any $\boldsymbol{\alpha}\in M$, let $\alpha_1,\alpha_2$ be the first and second marginals of $\boldsymbol{\alpha}$. Suppose that $\alpha_1=f\mu_1+\mu_1^\perp$ is the Lebesgue decomposition of $\alpha_1$ with respect to $\mu_1$. We define $\overline{\alpha}_1:= \min\{f,1\}\mu_1.$ Then $\overline{\alpha}_1\leq \mu_1$ and $\overline{\alpha}_1\leq \alpha_1$. By the Radon-Nikodym Theorem we get that there exists a measurable function $g:X_1\rightarrow [0,\infty)$ such that $\overline{\alpha}_1=g\alpha_1$ and $g\leq 1$ $\alpha_1$-a.e.

Next, for every Borel subsets $A_i$ of $X_i$, $i=1,2$, we define $$\overline{\boldsymbol{\alpha}}(A_1\times A_2):= \int_{A_1\times A_2}g(x_1)d\boldsymbol{\alpha} (x_1,x_2).$$ Then $\overline{\boldsymbol{\alpha}}(A_1\times X_2)=\int_{A_1}g(x_1)d\alpha_1(x_1)=\overline{\alpha}_1(A_1)$ for every Borel subset $A_1$ of $X_1$. For any Borel subset $A_2$ of $X_2$, we define $\overline{\alpha}_2(A_2):=\int_{X_1\times A_2}g(x_1)d\boldsymbol{\alpha} (x_1,x_2)$. Then $\overline{\alpha}_2(A_2)=\overline{\boldsymbol{\alpha}}(X_1\times A_2)$. This means that $\overline{\alpha}_1$ and $\overline{\alpha}_2$ are the first and second marginals of $\overline{\boldsymbol{\alpha}}$. Since $g\leq 1$ $\alpha_1$-a.e one has $\overline{\boldsymbol{\alpha}}\leq \boldsymbol{\alpha}$. Moreover, for every Borel function $h:X_1\times X_2\rightarrow [0,+\infty]$ we have
\begin{align*}
\int_{X_1\times X_2}h(x_1,x_2)d\overline{\boldsymbol{\alpha}}(x_1,x_2)&=\int_{X_1\times X_2}h(x_1,x_2)g(x_1)d\boldsymbol{\alpha}(x_1,x_2)\\
&=\int_{X_1}\left(\int_{X_2}h(x_1,x_2)g(x_1)d\alpha_{x_1}(x_2)\right)d\alpha_1(x_1)\\
&=\int_{X_1}\left(\int_{X_2}h(x_1,x_2)d\alpha_{x_1}(x_2)\right)d\overline{\alpha}_1(x_1).
\end{align*}
Therefore, by the uniqueness of disintegration we get that $\overline{\alpha}_{x_1}=\alpha_{x_1}$ $\overline{\alpha}_1$-a.e. This yields,
\begin{align*}
\int_{X_1}C(x_1,\overline{\alpha}_{x_1})d\overline{\alpha}_1(x_1)\leq \int_{X_1}C(x_1,\alpha_{x_1})d\alpha_1(x_1).
\end{align*} 
Notice that as $\overline{\boldsymbol{\alpha}}\leq \boldsymbol{\alpha}$, we have $\overline{\alpha}_2\leq \alpha_2$.

On the other hand, putting $D:=\{x_1\in X_1: f(x_1)\leq 1\}$ then we get that \begin{align*}
\left\vert \mu_1-\alpha_1\right\vert & = \int_{X_1}\left\vert 1-f(x_1)\right\vert d\mu_1+\mu_1^\perp (X_1)\\
& = \int_{D}(1-f(x_1))d\mu_1+\int_{X_1\minus D}(f(x_1)-1)d\mu_1+\mu_1^\perp (X_1)\\
& = \int_{D} d\mu_1-\int_{D}d\overline{\alpha}_1+\int_{X_1\minus D}f(x_1)d\mu_1-\int_{X_1\minus D}d\overline{\alpha}_1+\mu_1^\perp (X_1) \\
& = \int_{D} d\mu_1-\int_{D}d\overline{\alpha}_1+\int_{X_1\minus D}d\alpha_1-\int_{X_1\minus D}d\overline{\alpha}_1-\mu_1^\perp\left(X_1\minus D\right)+\mu_1^\perp (X_1)\\
&=\int_{D} d\mu_1-\int_{D}d\overline{\alpha}_1+\int_{X_1\minus D}d\alpha_1-\int_{X_1\minus D}d\overline{\alpha}_1+\int_D d\alpha_1-\int_Dfd\mu_1\\
& = \left\vert \mu_1-\overline{\alpha}_1\right\vert +\int_{X_1\minus D}d\alpha_1-\int_{X_1\minus D}d\overline{\alpha}_1 + \int_{D}d\alpha_1-\int_{D}d\overline{\alpha}_1\\
& = \left\vert \mu_1-\overline{\alpha}_1\right\vert+\left\vert \alpha_1-\overline{\alpha}_1\right\vert.
\end{align*} 
Observe that $\left\vert \alpha_1-\overline{\alpha}_1\right\vert=\left\vert \alpha_2-\overline{\alpha}_2\right\vert$, one gets \begin{align*}
\left\vert \mu_1-\overline{\alpha}_1\right\vert+\left\vert \mu_2-\overline{\alpha}_2\right\vert=\left\vert \mu_1-\alpha_1\right\vert-\left\vert \alpha_2-\overline{\alpha}_2\right\vert+\left\vert \mu_2-\overline{\alpha}_2\right\vert\leq \left\vert \mu_1-\alpha_1\right\vert+\left\vert \mu_2-\alpha_2\right\vert .
\end{align*}
Hence, we obtain that $$\E^{a,b}\left(\boldsymbol{\alpha}|\mu_1,\mu_2\right)\geq \E^{a,b}\left(\overline{\boldsymbol{\alpha}}|\mu_1,\mu_2\right).$$
Applying this process again for $\overline{\boldsymbol{\alpha}}$, we can find a plan $\widehat{\boldsymbol{\alpha}}\in M$ with its marginals are $\widehat{\alpha}_1$ and $\widehat{\alpha}_2$ such that $\widehat{\boldsymbol{\alpha}}\leq \overline{\boldsymbol{\alpha}}$ and $$\E^{a,b}\left(\overline{\boldsymbol{\alpha}}|\mu_1,\mu_2\right)\geq \E^{a,b}\left(\widehat{\boldsymbol{\alpha}}|\mu_1,\mu_2\right);$$
and $\widehat{\alpha}_2\leq \mu_2$, $\widehat{\alpha}_1\leq \overline{\alpha}_1\leq \mu_1$. Thus, $\widehat{\boldsymbol{\alpha}}\in M^\leq \left(\mu_1,\mu_2\right)$. Therefore, we get that $$\E^{a,b}\left(\boldsymbol{\alpha}|\mu_1,\mu_2\right)\geq \E\left(\overline{\boldsymbol{\alpha}}|\mu_1,\mu_2\right)\geq \E^{a,b}\left(\widehat{\boldsymbol{\alpha}}|\mu_1,\mu_2\right)\geq \inf_{\boldsymbol{\gamma}\in M^\leq \left(\mu_1,\mu_2\right)}\E^{a,b}\left(\boldsymbol{\gamma}|\mu_1,\mu_2\right).$$
This implies that $\inf_{\boldsymbol{\gamma}\in M}\E^{a,b}\left(\boldsymbol{\gamma}|\mu_1,\mu_2\right)\geq\inf_{\boldsymbol{\gamma}\in M^\leq \left(\mu_1,\mu_2\right)}\E^{a,b}\left(\boldsymbol{\gamma}|\mu_1,\mu_2\right)$. Hence, we get the result. 
\end{proof}

For every $a,b>0$ and $(\mu_1,\mu_2)\in \cM(X_1)\times \cM(X_2)$ we denote by $\Opt^{a,b}(\mu_1,\mu_2)$ the set of all $\boldsymbol{\gamma}\in M^\leq (\mu_1,\mu_2)$ such that $\E^{a,b}(\mu_1,\mu_2)=\E^{a,b}(\boldsymbol{\gamma}|\mu_1,\mu_2)$.
\begin{lem}\label{L-Existence of optimal plan}
Let $X_1,X_2$ be Polish metric spaces. For every $a,b>0$ and $\mu_i\in \cM(X_i),i=1,2$ the set $\Opt^{a,b}(\mu_1,\mu_2)$ is a nonempty subset of $\cM(X_1\times X_2)$.
\end{lem}
\begin{proof}
From Lemma \ref{L-inf in M=inf in M^<}, we choose a sequence of $\boldsymbol{\gamma}^n\in M^\leq (\mu_1,\mu_2)$ such that
$$\lim_{n\to \infty}\E^{a,b}(\boldsymbol{\gamma}^n|\mu_1,\mu_2)= \E^{a,b}(\mu_1,\mu_2).$$
Then $\gamma^n_i\leq \mu_i$ for $i=1,2$ and every $n\in\N$. Since $\mu_i\in \cM(X_i)$ for $i=1,2$, one has $\{\gamma_1^n\}_n$ and $\{\gamma_2^n\}_n$ are tight and bounded. By \cite[Lemma 5.2.2]{AGS} one gets that $\{\boldsymbol{\gamma}^n\}_{n\in\N}$ is also tight and bounded. Thus, by Prokhorov's Theorem, passing to a subsequence we can assume that 
$\lim_{n\to\infty}\boldsymbol{\gamma}^n= \boldsymbol{\gamma}$ under the weak*-topology for some $\boldsymbol{\gamma}\in \cM(X\times X)$.

Next, for any Borel subset $A_1$ of $X_1$ we have
\begin{eqnarray*}
\gamma_1(A_1)&=&\boldsymbol{\gamma}(A_1\times X_2)\\ 
&=&\inf\{\boldsymbol{\gamma}(V): V\subset X_1\times X_2 \mbox{ open }, A_1\times X_2\subset V\}\\
&\leq& \inf\{\boldsymbol{\gamma}(U\times X_2): U\subset X_1\mbox{ open }, A_1\subset U\}.
\end{eqnarray*}
Applying \cite[Theorem 6.1 page 40]{Par} we obtain that $\boldsymbol{\gamma}(U\times X_2)\leq \liminf_{n\to\infty}\boldsymbol{\gamma}^n(U\times X_2)\leq \mu_1(U)$ for every open subset $U$ of $X_1$. This yields, $\gamma_1\leq \mu_1.$ Similarly, we also have $\gamma_2\leq \mu_2.$ Moreover, using \cite[Theorem 6.1 page 40]{Par} again we also have that $\limsup_{n\to \infty}\vert\boldsymbol{\gamma}^n\vert\leq\vert\boldsymbol{\gamma}\vert\leq \liminf_{n\to\infty} \vert\boldsymbol{\gamma}^n\vert.$
This implies that $\lim_{n\to \infty}\vert\boldsymbol{\gamma}^n\vert=\vert\boldsymbol{\gamma}\vert$. Hence, $\lim_{n\to \infty}\left\vert \mu_i-\gamma^n_1\right\vert=\left\vert \mu_i-\gamma_i\right\vert,$ for $i=1,2$.

Applying \cite[Lemma 3.5]{CT3} we obtain that $$\liminf_{n\to \infty}\int_{X_1}C(x_1,\gamma^n_{x_1})d\gamma^n_1(x_1)\geq \int_{X_1}C(x_1,\gamma_{x_1})d\gamma_1(x_1).$$
So, we get that $$a\left\vert \mu_1-\gamma_1\right\vert+a\left\vert \mu_2-\gamma_2\right\vert+b\int_{X_1}C(x_1,\gamma_{x_1})d\gamma_1(x_1)\leq \E^{a,b}\left(\mu_1,\mu_2\right).$$ 
This implies that $\Opt^{a,b}(\mu_1,\mu_2)$ is nonempty.
\end{proof}

We recall the functionals $I,J:\mathbb{R}\rightarrow [-\infty,+\infty]$
\begin{align*}
I(\varphi):&=\inf_{s\geq 0}\left(s\varphi+a\vert 1-s\vert\right)= \left\lbrace\begin{array}{cl}
a &\text{ if } \varphi>a\\
\varphi &\text{ if } -a\leq \varphi\leq a\\
-\infty &\text{ otherwise, }
\end{array}\right.\\
J(\phi)&=\sup_{s>0}\frac{\phi-a\vert 1-s\vert}{s}=\left\lbrace\begin{array}{ll}
+\infty &\text{ if }\phi>a,\\
\phi &\text{ if }-a\leq \phi\leq a,\\
-a &\text{ otherwise},
\end{array}\right.
\end{align*}
 and
 \begin{align*}
 \Phi_I:=\bigg\{&(\varphi_1,\varphi_2)\in C_b(X_1)\times C_b(X_2):\varphi_1(x_1),\varphi_2(x_2)\geq -a\text{ for every } x_i\in X_i,i=1,2\\
&\text{and }\varphi_1(x_1)+q(\varphi_2)\leq b\cdot C(x_1,q) \text{ for every }x_1\in X_1, q\in\cP(X_2)\bigg\}, 
\end{align*}
 \begin{align*}
\Phi_J:=\bigg\{&(\varphi_1,\varphi_2)\in C_b(X_1)\times C_b(X_2):\varphi_1(x_1),\varphi_2(x_2)\leq a\text{ for every } x_i\in X_i,i=1,2\\
&\text{and }J(\varphi_1(x_1))+q(J(\varphi_2))\leq b\cdot C(x_1,q) \text{ for every }x_1\in X_1, q\in\cP(X_2)\bigg\}.
\end{align*}

We also set $$\Phi_J^0:=\{\varphi=(\varphi_1,\varphi_2)\in C_0(X_1)\times C_0(X_2): \varphi\in \Phi_J\}.$$
\begin{lem}\label{L-sup J = sup I} For every $\mu_1\in \cM(X_1)$ and $\mu_2\in \cM(X_2)$ one has
 $$\sup_{(\varphi_1,\varphi_2)\in \Phi_J}\sum_{i=1}^2\int_{X_i}\varphi_i(x_i)d\mu_i(x_i)\leq\sup_{(\psi_1,\psi_2)\in \Phi_I}\sum_{i=1}^2\int_{X_i} I(\psi_i(x_i))d\mu_i(x_i).$$
\end{lem}
\begin{proof}
For every $(\varphi_1,\varphi_2)\in \Phi_J$ and $i\in \{1,2\}$ we define $\overline{\varphi}_i:=J(\varphi_i)$. Then for every $x_i\in X_i$ we have that $ \overline{\varphi}_i(x_i)\in [-a,a]$ for $i=1,2$. Thus, from \eqref{F-definition of I} one has $I(\overline{\varphi}_i)=\overline{\varphi}_i$. Moreover, by the definition of $\Phi_J$, we also get $\overline{\varphi}_1(x_1)+q(\overline{\varphi}_2)\leq b\cdot C(x_1,q)$ for every $x_1\in X_1,q\in\cP(X_2)$. Since $J$ is continuous on $(-\infty,a]$ we get that $(\overline{\varphi}_1,\overline{\varphi}_2)\in \Phi_I$. As $\overline{\varphi}_i=J(\varphi_i)\geq \varphi_i$ for $i=1,2$ we obtain that $$\sum_{i=1}^2\int_{X_i}\varphi_i(x_i)d\mu_i(x_i)\leq\sum_{i=1}^2\int_{X_i} \overline{\varphi}_i(x_i) d\mu_i(x_i)= \sum_{i=1}^2\int_{X_i} I(\overline{\varphi}_i(x_i))d\mu_i(x_i).$$
Hence, we get the result.
\end{proof}

\begin{lem}
\label{L-easy part of duality}
Suppose that $X_1,X_2$ are Polish metric spaces. For every $a,b>0$ and $\mu_i\in\mathcal{M}(X_i), i=1,2$, we have 
\begin{align*}
\E^{a,b}(\mu_1,\mu_2)\geq\sup\limits_{(\varphi_1,\varphi_2)\in\Phi_I} \sum_{i=1}^2 \int_{X_i} I\left(\varphi_i(x_i)\right) d\mu_i(x_i).
\end{align*}
\end{lem}
\begin{proof}
Let $\mu_i\in \cM(X_i),i=1,2$. Thanks to Lemma \ref{L-Existence of optimal plan}, let $\boldsymbol{\gamma}\in \Opt^{a,b}(\mu_1,\mu_2)$. Then $\gamma_i\leq \mu_i$ for $i=1,2$. By Radon-Nikodym Theorem, there exists a measurable function $f_i:X\rightarrow [0,\infty)$ such that $\gamma_i=f_i\mu_i$ and $f_i\leq 1$ $\mu_i$-a.e. Therefore, for every $\left(\varphi_1,\varphi_2\right)\in\Phi_I$ we get that \begin{align*}
\E^{a,b}\left(\mu_1,\mu_2\right) & = \sum_{i=1}^2\int_{X_i}a\left(1-f_i(x_i)\right)d\mu_i(x_i)+b\int_{X_1} C(x_1,\gamma_{x_1})d\gamma_1(x_1)\\
& \geq \sum_{i=1}^2\int_{X_i}a\left(1-f_i(x_i)\right)d\mu_i(x_i) + \int_{X_1} \left(\varphi_1(x_1)+\gamma_{x_1}(\varphi_2)\right)d\gamma_1(x_1)\\
&= \sum_{i=1}^2\int_{X_i}a\left(1-f_i(x_i)\right)d\mu_i(x_i) + \int_{X_1}\varphi_1(x_1)d\gamma_1+\int_{X_1}\int_{X_2}\varphi_2(x_2)d\gamma_{x_1}(x_2)d\gamma_1(x_1)\\
& = \sum_{i=1}^2\int_{X_i}\left(a\left(1-f_i(x_i)\right)+f_i(x_i)\varphi_i(x_i)\right)d\mu_i(x_i).
\end{align*}
Furthermore, for all $x_i\in X_i$, since $f_i(x_i)\geq 0$, $f_i\leq 1$ $\mu_i$-a.e and \eqref{F-definition of I} we get $$\int_{X_i}I(\varphi_i(x_i))d\mu_i(x_i)\leq\int_{X_i}( f_i(x_i)\varphi_i(x_i)+a(1-f_i(x_i))d\mu_i(x_i),\text{ for }i=1,2.$$ Hence, $$\E^{a,b}\left(\mu_1,\mu_2\right)\geq  \sum_{i=1}^2 \int_{X_i} I\left(\varphi_i(x_i)\right) d\mu_i(x_i),
\text{ for every }\left(\varphi_1,\varphi_2\right)\in\Phi_I.$$ Thus, we get the result.
\end{proof}

For $i=1,2$ we denote by $\cM_s(X_i)$ the space of signed Borel measures with finite mass on $X_i$. Then for every $a,b>0$ we define the functional $\ET^{a,b}:\cM_s(X_1)\times \cM_s(X_2)\to [0,+\infty]$ by
$$\ET^{a,b}(\mu_1,\mu_2)=\left\lbrace\begin{array}{ll}
\inf_{\boldsymbol{\gamma}\in M}\E^{a,b}(\boldsymbol{\gamma}|\mu_1,\mu_2)&\text{ if }(\mu_1,\mu_2)\in \cM(X_1)\times\cM(X_2),\\
+\infty &\text{ otherwise}.
\end{array}\right.$$
\begin{lem}\label{L-homogeneous, convex, lsc of W}
Let $X_1,X_2$ be Polish metric spaces and $a,b>0$. Then 
\begin{enumerate}
\item $\ET^{a,b}$ is  positively one homogeneous, convex.
\item  If moreover $X_1$ and $X_2$ are locally compact then $\ET^{a,b}$ is lower semi-continuous under the weak*-topology.
\end{enumerate}
\end{lem}
\begin{proof}
(1) Let $\mu_i\in \cM_s(X_i),i=1,2$ and $k\geq 0$. If there exists $i\in\{1,2\}$ such that $\mu_i\not\in\cM(X_i)$ then $k\ET^{a,b}(\mu_1,\mu_2)=+\infty=\ET^{a,b}(k\mu_1,k\mu_2).$ So we only need consider $(\mu_1,\mu_2)\in \cM(X_1)\times\cM(X_2)$. Let $\boldsymbol{\gamma}\in \Opt^{a,b}(k\mu_1,k\mu_2)$ then one has \begin{align*}
\ET^{a,b}(k\mu_1,k\mu_2)&=a\left\vert k\mu_1-\gamma_1\right\vert+a\left\vert k\mu_2-\gamma_2\right\vert+b\int_{X_1}C(x_1,\gamma_{x_1})d\gamma_1(x_1)\\
& =k\left( a\left\vert \mu_1-(\gamma_1/k)\right\vert+a\left\vert \mu_2-(\gamma_2/k)\right\vert+b\int_{X_1}C(x_1,\gamma_{x_1})d(\gamma_1(x_1)/k)\right)\\
&\geq k\ET^{a,b}(\mu_1,\mu_2).
\end{align*}
Similarly, we also have $k\ET^{a,b}(\mu_1,\mu_2)\geq \ET^{a,b}(k\mu_1,k\mu_2)$ and thus $\ET^{a,b}(k\mu_1,k\mu_2)=k\ET^{a,b}(\mu_1,\mu_2)$. This shows that $\ET^{a,b}$ is  positively one homogeneous.

By the homogeneity property of $\ET^{a,b}$, to show that $\ET^{a,b}$ is convex, we only need to prove that  
$$\ET^{a,b}(\mu_1,\mu_2)+\ET^{a,b}(\nu_1,\nu_2)\geq \ET^{a,b}(\mu_1+\nu_1,\mu_2+\nu_2),$$
for every $(\mu_1,\mu_2),(\nu_1,\nu_2)\in \cM_s(X_1)\times\cM_s(X_2)$. We will consider $(\mu_1,\mu_2),(\nu_1,\nu_2)\in \cM(X_1)\times\cM(X_2)$ (the other cases are trivial). Let $\boldsymbol{\gamma}\in \Opt^{a,b}(\mu_1,\mu_2)\text{ and } \overline{\boldsymbol{\gamma}}\in \Opt^{a,b}(\nu_1,\nu_2)$. By the convexity of $C(x_1,\cdot)$ and observe that $\bigg((d\gamma_1/d(\gamma_1+\overline{\gamma}_1))\gamma_{x_1}+(d\overline{\gamma}_1/d(\gamma_1+\overline{\gamma}_1))\overline{\gamma}_{x_1}\bigg)_{x_1\in X_1}$ is the disintegration of $\boldsymbol{\gamma}+\overline{\boldsymbol{\gamma}}$ with respect to $\gamma_1+\overline{\gamma}_1$,  we have that
 $$\int_{X_1}C(x_1, (\gamma+\overline{\gamma})_{x_1})d(\gamma_1+\overline{\gamma}_1)\leq \int_{X_1}C(x_1,\gamma_{x_1})d\gamma_1+\int_{X_1}C(x_1,\overline{\gamma}_{x_1})d\overline{\gamma}_1.$$
This yields,
\begin{align*}
\ET^{a,b}(\mu_1,\mu_2)+\ET^{a,b}(\nu_1,\nu_2)&=a\sum_{i=1}^2\vert(\mu_i+\nu_i)-(\gamma_i+\overline{\gamma}_i)\vert+b\int_{X_1}C(x_1,\gamma_{x_1})d\gamma_1(x_1)\\
&+b\int_{X_1}C(x_1,\overline{\gamma}_{x_1})d\overline{\gamma}_1(x_1)\\
&\geq \ET^{a,b}(\mu_1+\nu_1,\mu_2+\nu_2).
\end{align*}

(2) For $i=1,2$, let $\{\mu_i^n\}\subset\cM(X_i)$ such that $\mu_i^n\to \mu_i\in \cM(X_i)$ as $n\to\infty$ under the weak*-topology. Then $\{\mu_i^n\}$ is relatively compact and by Prokhorov's Theorem, $\{\mu_i^n\}$ is tight and bounded. For each $n\in \N$ let $\boldsymbol{\gamma}^n\in \Opt^{a,b}(\mu_1^n,\mu_2^n)$ then $\gamma_i^n\leq \mu_i^n$ for $i=1,2$. This implies that $\{\gamma_i^n\}$ is also tight and bounded. Hence, by Prokhorov's Theorem, passing to a subsequence we can assume that $\lim_{n\to\infty}\gamma_i^n=\gamma_i$ for $\gamma_i\in\cM(X_i)$. Furthermore, for every $\mu,\nu\in\cM(X_i)$ by \cite[Theorem 6.19]{Rudin} we have that $$\vert \mu-\nu\vert =\sup\bigg\{\int_{X_i}fd(\mu-\nu)|f\in C_0(X_i),\Vert f\Vert_\infty\leq 1\bigg\}.$$
From this formula we get that $\liminf_{n\rightarrow \infty}\vert \mu_{i}^n-\gamma_{i}^n\vert\geq \vert\mu_i-\gamma_i\vert$ for $i=1,2$. By \cite[Lemma 3.5]{CT3} we get that $\int_{X_1}C(x_1,\gamma_{x_1})d\gamma_1(x_1)$ is lower semi-continuous under the weak*-topology. Therefore,
$$\liminf_{n\rightarrow\infty}\E^{a,b}(\mu_{1}^n,\mu_{2}^n)\geq a\vert \mu_1-\gamma_1\vert+a\vert \mu_2-\gamma_2\vert+b\int_{X_1}C(x_1,\gamma_{x_1})d\gamma_1(x_1)\geq \E^{a,b}(\mu_1,\mu_2).$$
This means that $\E^{a,b}$ is lower semi-continuous. Therefore, $\ET^{a,b}$ is also lower semi-continuous since $\cM(X_1)\times\cM(X_2)$ is closed.
\end{proof}

We now prove the main result in this section.
\begin{proof}[Proof of Theorem \ref{T-duality for weak cost function}]
Denote by $(\ET^{a,b})^*$ the Fenchel conjugate of $\ET^{a,b}$, i.e. 
$$(\ET^{a,b})^*(\varphi_1,\varphi_2):=\sup_{(m_1,m_2)\in \cM_s(X_1)\times\cM_s(X_2)}\bigg\{\sum_{i=1}^2\int_{X_i}\varphi_i(x_i)dm_i(x_i)-\E^{a,b}(m_1,m_2)\bigg\},$$
for every $(\varphi_1,\varphi_2)\in C_0(X_1)\times C_0(X_2)$. Notice that the dual space of $C_0(X_i)$ is $\cM_s(X_i)$. By Lemma \ref{L-homogeneous, convex, lsc of W} one has $\ET^{a,b}$ is positively one homogeneous. This implies that $$(\ET^{a,b})^*(\varphi_1,\varphi_2)=\left\lbrace\begin{array}{ll}
0 &\text{ if }(\varphi_1,\varphi_2)\in \Phi_E,\\
+\infty &\text{ otherwise},
\end{array}\right.$$
where \begin{align*}
\Phi_E:=\bigg\{(\varphi_1,\varphi_2)\in C_0(X_1)\times C_0(X_2)&: \sum_{i=1}^2\int_{X_i}\varphi_i(x_i)dm_i(x_i)\leq \ET^{a,b}(m_1,m_2)\\
& \mbox{ for every } (m_1,m_2)\in \cM_s(X_1)\times\cM_s(X_2)\bigg\}.
\end{align*}
We now check that $\Phi_E=\Phi_J^0$. Let any $(\varphi_1,\varphi_2)\in \Phi_J^0$. Let $m_i\in \cM_s(X_i)$, $i=1,2$. If $\ET^{a,b}(m_1,m_2)=+\infty$ then it is clear that $\sum_{i=1}^2\int_{X_i}\varphi_i(x_i)dm_i(x_i)\leq \ET^{a,b}(m_1,m_2)$. Thus, we only consider $(m_1,m_2)\in \cM(X_1)\times \cM(X_2)$. By Lemmas \ref{L-easy part of duality} and \ref{L-sup J = sup I} we get that   $$\sum_{i=1}^2\int_{X_i}\varphi_idm_i\leq \sup_{(\phi_1,\phi_2)\in \Phi_I}\sum_{i=1}^2\int_{X_i}I(\phi_i)dm_i\leq \E^{a,b}(m_1,m_2)=\ET^{a,b}(m_1,m_2).$$
Therefore, $(\varphi_1,\varphi_2)\in \Phi_E$ and thus $\Phi^0_J\subset \Phi_E$.

Now, let any $(\varphi_1,\varphi_2)\in \Phi_E$. We will show that $(\varphi_1,\varphi_2)\in \Phi_J^0$. Denote by $\eta$ the null measure on $X_1\times X_2$. As $(\varphi_1,\varphi_2)\in \Phi_E$, for every $(m_1,m_2)\in \cM(X_1)\times\cM(X_2)$ one has $$\sum_{i=1}^2\int_{X_i} \varphi_i(x_i) dm_i(x_i)\leq \E^{a,b}(m_1,m_2)\leq \E^{a,b}(\eta|m_1,m_2)=a(\vert m_1\vert+\vert m_2\vert).$$
For every $z\in X_1$, setting $m_1:=\delta_{z}$ and $m_2$ is the null measure on $X_2$, we obtain that $\varphi_1(z)\leq a$. Similarly, we also have $\varphi_2\leq a$ on $X_2$.

On the other hand, for any $w\in X_1$ and $q\in \cP(X_2)$ putting $m_1:=\delta_{w}, m_2:=q|_B$ and $\overline{\boldsymbol{\gamma}}:=\delta_{w}\otimes q$, where $B:=\{x_2\in X_2|\varphi_2(x_2)\geq -a\}$. Then $$\varphi_1(w)+\int_B\varphi_2dq=\sum_{i=1}^2\int_{X_i}\varphi_i(x_i)dm_i(x_i)\leq \E^{a,b}(\overline{\boldsymbol{\gamma}}|m_1,m_2)=a.q(X_2\minus B)+ b\cdot C(w,q).$$
From \eqref{F-definition of J}, if $\varphi_1(w)<-a$ then $$J(\varphi_1(w))+q(J(\varphi_2))\leq -a+a=0\leq b\cdot C(w,q),$$
and if $\varphi_1(w)\geq -a$ then \begin{align*}
J(\varphi_1(w))+q(J(\varphi_2))&= \varphi_1(w)+\int_BJ(\varphi_2)dq+\int_{X_2\minus B}J(\varphi_2)dq\\
&= \varphi_1(w)+\int_B\varphi_2dq-a.q(X_2\minus B)\\
&\leq b\cdot C(w,q).
\end{align*}
Therefore, $(\varphi_1,\varphi_2)\in \Phi^0_J$ and hence $\Phi_E\subset\Phi^0_J$. Thus, $\Phi_E=\Phi^0_J$.

Moreover, by Lemma \ref{L-homogeneous, convex, lsc of W} one has $\ET^{a,b}$ is convex and lower semi-continuous. Hence, applying \cite[Proposition 3.1, page 14 and Proposition 4.1, page 18]{Ekeland} we get that $(\ET^{a,b})^{**}=\ET^{a,b}$. Therefore,
\begin{align*}
\ET^{a,b}(\mu_1,\mu_2)&=\sup_{(\varphi_1,\varphi_2)\in C_0(X_1)\times C_0(X_2)}\left\{\sum_{i=1}^2\int_{X_i}\varphi_i(x_i)d\mu_i(x_i)-(\ET^{a,b})^*(\varphi_1,\varphi_2)\right\}\\
&=\sup_{(\varphi_1,\varphi_2)\in \Phi^0_J}\sum_{i=1}^2\int_{X_i}\varphi_i(x_i)d\mu_i(x_i)\\
&\leq \sup_{(\varphi_1,\varphi_2)\in \Phi_J}\sum_{i=1}^2\int_{X_i}\varphi_i(x_i)d\mu_i(x_i).
\end{align*}
Now, using Lemmas \ref{L-sup J = sup I} and \ref{L-easy part of duality} we get the result.
\end{proof}

\begin{proof}[Proof of Corollary \ref{C-CM}] We define the cost function $C:X\times\cP(X)\to [0,\infty]$ by 
$$C(x,q):=\int_{X}c_1(x,y)dq(y),$$
for every $x\in X$ and $q\in \cP(X)$. We will check that $C$ is lower semi-continuous on $X\times \cP(X)$. Let $(x^n,q^n)\subset X\times \cP(X)$ such that $(x^n,q^n)\to (x^0,q^0)$ as $n\to\infty$. Then as $c_1$ is lower semi-continuous on $X\times X$ and non-negative, by \cite[Lemma 4.2]{CT3} we get that
$$\liminf_{n\to\infty} C(x^n,q^n)=\liminf_{n\to\infty}\int_Xc_1(x^n,y)dq^n(y)\geq \int_Xc_1(x^0,y)dq^0(y)=C(x^0,q^0).$$
This means that $C$ is lower semi-continuous on $X\times\cP(X)$.
 
Next, a one-to-one correspondence between $\boldsymbol{\gamma}\in \cM^\leq (\mu_1,\mu_2)$ and $\hat{\boldsymbol{\gamma}}\in \Gamma(\hat{\mu}_1,\hat{\mu}_2)$ is given by $$\hat{\boldsymbol{\gamma}}=\boldsymbol{\gamma}+\vert 1-f_1\vert\mu_1\otimes \delta_{\hat{\infty}}+\delta_{\hat{\infty}}\otimes\vert 1-f_2\vert\mu_2+\vert\boldsymbol{\gamma}\vert\delta_{(\hat{\infty},\hat{\infty})},$$
where $f_i$ is the Radon-Nikodym derivative of $\gamma_i$ with respect to $\mu_i$. From this and Theorem \ref{T-duality for weak cost function} we obtain that \begin{align*}
\inf_{\hat{\boldsymbol{\gamma}}\in \Gamma(\hat{\mu}_1,\hat{\mu}_2)}\int_{\hat{X}}\hat{c}_1(x,y)d\hat{\boldsymbol{\gamma}}(x,y)=\E^{a,b}(\mu_1,\mu_2)=\sup_{(\varphi_1,\varphi_2)\in \Phi_J}\sum_{i=1}^2\int_{X}\varphi_i(x)d\mu_i(x).
\end{align*}

Now, for any $(\varphi_1,\varphi_2)\in \Phi_J$ we define $\hat{\varphi_i}(x)=J(\varphi_i(x))$ if $x\in X$ and $\hat{\varphi}_i(x)=0$ if $x=\hat{\infty}$ for $i=1,2$. Then $\hat{\varphi}_i\in L^1(\hat{\mu}_i)$ for $i=1,2$. 
As $(\varphi_1,\varphi_2)\in \Phi_J$, for every $x,y\in X$ we have
\begin{align*}
J(\varphi_1(x))+J(\varphi_2(y))=J(\varphi_1(x))+\delta_y(J(\varphi_2))\leq b\cdot C(x,\delta_y)=b\cdot c_1(x,y).
\end{align*} 
Hence $\hat{\varphi}_1(x)+\hat{\varphi}_2(y)\leq \hat{c}_1(x,y)\text{ for every }x,y\in\hat{X}.$
Moreover we also have \begin{align*}
\int_{\hat{X}}\hat{\varphi}_1(x)d\hat{\mu}_1(x)&=\int_X\hat{\varphi}_1(x)d\mu_1(x)+\hat{\varphi}_1(\hat{\infty})|\mu_2|\\
&=\int_XJ(\varphi_1(x))d\mu_1(x)\\
&\geq\int_X\varphi_1(x)d\mu_1(x).
\end{align*}
Similarly, $\int_{\hat{X}}\hat{\varphi}_2(x)d\hat{\mu}_2(x)\geq\int_X\varphi_2(x)d\mu_2(x)$. Therefore, $$\sup_{(\varphi_1,\varphi_2)\in \Phi_J}\sum_{i=1}^2\int_{X}\varphi_i(x)d\mu_i(x)\leq \sup_{\substack{(\hat{\varphi}_1,\hat{\varphi}_2)\in L^1(\hat{\mu}_1)\times L^1(\hat{\mu}_2)\\\hat{\varphi}_1(x)+\hat{\varphi}_2(y)\leq \hat{c}_1(x,y)}}\sum_{i=1}^2\int_{\hat{X}}\hat{\varphi}_i(x)d\hat{\mu_i}(x).$$
This implies that \begin{align*}
\inf_{\hat{\boldsymbol{\gamma}}\in \Gamma(\hat{\mu}_1,\hat{\mu}_2)}\int_{\hat{X}}\hat{c}_1(x,y)d\hat{\boldsymbol{\gamma}}(x,y)&\leq \sup_{\substack{(\hat{\varphi}_1,\hat{\varphi}_2)\in L^1(\hat{\mu}_1)\times L^1(\hat{\mu}_2)\\\hat{\varphi}_1(x)+\hat{\varphi}_2(y)\leq \hat{c}_1(x,y)}}\sum_{i=1}^2\int_{\hat{X}}\hat{\varphi}_i(x)d\hat{\mu_i}(x)\\
&\leq \inf_{\hat{\boldsymbol{\gamma}}\in \Gamma(\hat{\mu}_1,\hat{\mu}_2)}\int_{\hat{X}}\hat{c}_1(x,y)d\hat{\boldsymbol{\gamma}}(x,y).
\end{align*}
Hence, we get the result.
\end{proof}

\begin{proof}[Proof of Corollary \ref{C-flat metrics}]
(1) Applying Theorem \ref{T-duality for weak cost function} for $X_1=X_2=X$ and $C(x,q)=\int_Xc(x,y)dq(y)$, where $c(x,y)=(b\cdot d(x,y))^p$ for every $x,y\in X$ then we get the result.

(2) We use the techniques of the proof of \cite[Theorem 1.14]{V03} to prove (2).
For every $(\psi,\varphi)\in \Phi_W$, we define $\varphi^d(x):=\inf_{y\in X}[b\cdot d(x,y)-\varphi(y)]$ for every $x\in X$. Then $\varphi^d$ is $b$-Lipschitz function and $\varphi^d(x)\in [-a,a]$ for every $x\in X$. Therefore $\varphi^d\in \F$. Now we define $\varphi^{dd}(y):=\inf_{x\in X}[b\cdot d(x,y)-\varphi^d(x)]$ for every $y\in X$. Then $\varphi^{dd}$ is $b$-Lipschitz and 
$$\varphi^d(x)+\varphi^{dd}(y)\leq b\cdot d(x,y), \mbox{ for every } x,y\in X.$$
As $-a\leq \varphi^{d}(x)\leq a$ we also get that $-a\leq \varphi^{dd}(y)\leq a$ for every $y\in X$. Therefore we have $\varphi^{dd}\in \F$ and $(\varphi^d,\varphi^{dd})\in \Phi_W$.

On the other hand, as $\psi(x)+\varphi(y)\leq b\cdot d(x,y)$ for every $x,y\in X$ we get that 
$$\psi(x)\leq \inf_{y\in X}[b\cdot d(x,y)-\varphi(y)]=\varphi^d(x) \mbox{ for every } x\in X.$$
Similarly, from the definitions of $\varphi^{dd}$ we also have $\varphi^{dd}(y)\geq \varphi(y)$ for every $y\in Y$. Hence 
\begin{align*}
\int_{X} I\left(\psi\right)d\mu+\int_{X} I\left(\varphi\right)d\nu \leq \int_{X} I\left(\varphi^d\right)d\mu+\int_{X} I\left(\varphi^{dd}\right)d\nu.
\end{align*}
Therefore, $$\sup_{(\psi,\varphi)\in \Phi_W}\bigg\{\int_{X} I\left(\psi\right)d\mu+\int_{X} I\left(\varphi\right)d\nu \bigg\}\leq \sup_{\varphi\in C_b(X)}\bigg\{\int_{X} I\left(\varphi^d\right)d\mu+\int_{X} I\left(\varphi^{dd}\right)d\nu \bigg\}.$$
As $\varphi^d$ is $b$-Lipschitz we get $$-\varphi^d(x)\leq \inf_{y\in X}[b\cdot d(x,y)-\varphi^d(y)].$$
On the other hand, $\inf_{y\in X}[b\cdot d(x,y)-\varphi^d(y)]\leq -\varphi^d(x)$. Hence $$\varphi^{dd}(x)=\inf_{y\in X}[b\cdot d(x,y)-\varphi^d(y)]=-\varphi^d(x).$$
Thus
\begin{eqnarray*}
\sup_{(\psi,\varphi)\in \Phi_W}\bigg\{\int_{X} I\left(\psi\right)d\mu+\int_{X} I\left(\varphi\right)d\nu \bigg\}&\leq& \sup_{\varphi\in C_b(X)}\bigg\{\int_{X} I\left(\varphi^d\right)d\mu+\int_{X} I\left(\varphi^{dd}\right)d\nu \bigg\}\\
&= &\sup_{\varphi\in C_b(X)}\bigg\{\int_{X} I\left(\varphi^d\right)d\mu+\int_{X} I\left(-\varphi^{d}\right)d\nu \bigg\}\\
&\leq &\sup_{\varphi\in \F}\bigg\{\int_{X} I\left(\varphi\right)d\mu+\int_{X} I\left(-\varphi\right)d\nu \bigg\}\\
&\leq& \sup_{(\psi,\varphi)\in \Phi_W}\bigg\{\int_{X} I\left(\psi\right)d\mu+\int_{X} I\left(\varphi\right)d\nu \bigg\}.
\end{eqnarray*}
So we must have equality everywhere and get the result.
\end{proof}

\begin{remark}
1) Corollary \ref{C-flat metrics} (2) has been proved in \cite[Theorem 2]{PR16} for the case $a=b=1$ and $X=\R^n$ by a different method.

2) (\cite{Hanin1,Hanin2}) Let $(X,d)$ be a Polish metric space. Let $\cM^0(X)$ be the set of all $\mu\in \cM_s(X)$ such that $\mu(X)=0$. For every $\mu\in \cM^0(X)$ we denote by $\Psi_\mu$ the set of all nonnegative measures $\boldsymbol{\gamma}\in \cM(X\times X)$ such that $\lambda(X\times A)-\lambda(A\times X)=\mu(A)$ for every Borel $A\subset X$. Then we define for every $\mu\in \cM^0(X)$,
$$\|\mu\|^0_d:=\inf_{\boldsymbol{\gamma}\in \Psi_\mu}\bigg\{\int_{X\times X}d(x,y)d\boldsymbol{\gamma}(x,y)\bigg\}.$$
Now, on the vector space $\cM_s(X)$ we define an extension Kantorovich-Rubinstein norm as following
$$\|\mu\|_d:=\inf_{\nu\in \cM^0(X)}\bigg\{\|\nu\|^0_d+|\mu-\nu|(X)\bigg\}, \mbox{ for every } \mu\in \cM_s(X).$$
Then from \cite[Theorem 0]{Hanin1} (when $X$ is compact) or \cite[Theorem 1]{Hanin2} (when $X$ is a general Polish metric space), applying Hahn-Banach Theorem we get that 
$$\|\mu\|_d=\sup\bigg\{\int_X fd(\mu-\nu):f\in \F\bigg\},$$
where $\F:=\big\{f\in C_b(X), \|f\|_\infty\leq 1, \|f\|_{Lip}\leq 1\big\}$.
We thank Benedetto Piccoli and Francesco Rossi for pointing \cite{Hanin1} out to us, and we have found \cite{Hanin2} after that.
\end{remark}

Using Corollary \ref{C-flat metrics} (2) we get another proof of \cite[Lemma 5]{PRT}.
\begin{cor} Let $X$ be a locally compact, Polish metric space. For every $\mu,\nu,\eta\in \cM(X)$ we have 
$$\widetilde{W}_1^{a,b}(\mu+\eta,\nu+\eta)=\widetilde{W}^{a,b}_1(\mu,\nu).$$
\end{cor}
From Corollary \ref{C-flat metrics} (2) we also get a similar result in \cite[Lemma 1.5]{Basso}.
\begin{cor}
Let $\left(X_1,d_1\right)$ and $\left(X_2,d_2\right)$ be locally compact, Polish metric spaces. If $\psi:\left(X_1,d_1\right)\rightarrow\left(X_2,d_2\right)$ is an isometry map then the map $\psi_\sharp:\left(\mathcal{M}\left(X_1\right), \widetilde{W}^{a,b}_1\right)\rightarrow \left(\mathcal{M}\left(X_2\right), \widetilde{W}^{a,b}_1\right)$ is also an isometry.
\end{cor}
\begin{proof} For every $\mu,\nu\in \cM(X_1)$, it is clear that $\widetilde{W}^{a,b}_1\left(\mu,\nu\right)\geq \widetilde{W}^{a,b}_1\left(\psi_\sharp\mu,\psi_\sharp\nu\right)$ and $\psi_\sharp$ is surjective. Hence, we need to show that $\widetilde{W}^{a,b}_1\left(\psi_\sharp\mu,\psi_\sharp\nu\right)\geq \widetilde{W}^{a,b}_1\left(\mu,\nu\right)$.\\
Let $\F_i =\left\lbrace f\in C_b(X_i), \Vert f\Vert_\infty \leq a, \Vert f\Vert_{Lip}\leq b\right\rbrace, i=1,2$. By Corollary \ref{C-flat metrics} (2) one has
\begin{align*}
\widetilde{W}_1^{a,b} \left(\psi_{\sharp}\mu, \psi_{\sharp}\nu\right)=\sup\limits_{g\in \F_2}\int_{X_2} g\,d\left(\psi_{\sharp}\mu- \psi_{\sharp}\nu\right) = \sup\limits_{g\in \F_2} \int_{X_1} g\circ\psi \,d\left(\mu-\nu\right).
\end{align*}
For every $f\in \F_1$ and every $y\in X_2$ we define $h(y):= \inf\limits_{x\in X_1} \left[b\cdot d_2\left(y,\psi (x)\right) + f(x) \right]$. Then $h$ is $b$-Lipschitz and $h(y)\geq -a$ for every $y\in X_2$. Since $\psi$ is surjective, for every $y\in X_2$, there exists $x'\in X_1$ such that $\psi(x')=y$. Thus, $h(y)\leq f(x')\leq a$. Therefore, $h\in \F_2$. Moreover, since $f$ is $b$-Lipschitz, for every $x\in X_1$ one has 
\begin{align*}
f(x)&=\inf\limits_{x_1\in X_1} \left[ f(x_1) + \vert f(x) -f(x_1)\vert \right]\\ &\leq \inf\limits_{x_1\in X_1} \left[ f(x_1) + b\cdot d_1(x,x_1) \right]\\ 
&= \inf\limits_{x_1\in X_1} \left[ f(x_1) + b\cdot d_2\left(\psi(x) ,\psi( x_1)\right) \right]\\&\leq f(x).
\end{align*}
Therefore, $f(x)=h\left(\psi (x)\right)$, for all $x\in X_1$ or $f=h\circ \psi$. Hence, we get that 
\begin{align*}
\int_{X_1} f\, d(\mu-\nu) =\int_{X_1} h\circ \psi \, d(\mu-\nu) \leq \sup\limits_{g\in \F_2}\int_{X_1} g\circ\psi \,d\left(\mu-\nu\right)= \widetilde{W}_1^{a,b} \left(\psi_{\sharp}\mu, \psi_{\sharp}\nu\right).
\end{align*}
So that $\widetilde{W}_1^{a,b} \left(\mu, \nu\right)\leq \widetilde{W}_1^{a,b} \left(\psi_{\sharp}\mu, \psi_{\sharp}\nu\right)$.
\end{proof} 

\section{Barycenter problem and an its dual problem}
Let $(X,d)$ be a locally compact, Polish metric space. For every integer $k\geq 2$, we consider $k$ measures $\mu_1,\mu_2,\ldots,\mu_k$ in $\mathcal{M}(X)$ such that $\text{supp}(\mu_i)$ is a compact subset of $X$ for every $i\in\{1,\ldots,k\}$.
Let $\lambda_1,\lambda_2,\ldots,\lambda_k$ be positive real numbers such that $\sum_{i=1}^k\lambda_i=1$ and let $K=\bigcup_{i=1}^k \text{supp}(\mu_i)$, we consider the following problem $$(B) \inf_{\text{supp}(\mu)\subset K} \sum_{i=1}^k\lambda_i \widetilde{W}^{a,b}_2\left(\mu_i,\mu\right)^2.$$  
\begin{remark} Let $X=\R^d$. For every $m>0, a,b\geq 0$ and $\mu_1,\mu_2\in \cM(X)$ we define $$\widetilde{W}_{2,m}^{a,b}(\mu_1,\mu_2):=\inf_{\gamma_i\in \cM_2(X), \gamma_i\leq \mu_i,\vert \boldsymbol{\gamma}\vert=m}a\sum_{i=1}^2\vert \mu_i-\gamma_i\vert+b\int_{X\times X} \vert x-y\vert^2 d\boldsymbol{\gamma}(x,y).$$
In \cite{KP} Kitagawa and Pass introduced and investigated the following partial barycenter problem.
$$\inf_{\mu\in\cM(X),\vert \mu\vert=m}\sum_{i=1}^k\widetilde{W}_{2,m}^{0,1}(\mu_i,\mu)^2.$$
The methods there are different from us as they study their partial barycenters via multi-marginal optimal transports while we use duality formulations for our barycenter problems in generalized Wasserstein spaces.
\end{remark}

\begin{thm}\label{T-existence of barycenter}
Problem $(B)$ has solutions.
\end{thm}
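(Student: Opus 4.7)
The plan is to use the direct method in the calculus of variations. First, we verify that the infimum is finite: taking $\mu = 0$ together with the trivial admissible choice $\widetilde{\mu_i} = \widetilde{\mu} = 0$ in the definition of the generalized Wasserstein distance yields $\widetilde{W}^{a,b}_2(\mu_i,0)^2 \leq a\vert\mu_i\vert$, so the functional at $\mu = 0$ is bounded above by $a\sum_{i=1}^k \lambda_i \vert\mu_i\vert < \infty$.

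Next we would take a minimizing sequence $\{\mu^n\}$ with $\supp(\mu^n) \subset K$ and extract a weak*-convergent subsequence by Prokhorov. Tightness of $\{\mu^n\}$ is automatic since $K$ is compact as a finite union of compact sets. For the uniform mass bound, observe that Proposition~\ref{P-less measures for general Wassertein spaces} forces the optimal submeasures for $\widetilde{W}^{a,b}_2(0,\nu)$ to satisfy $\widetilde{\mu_1} \leq 0$, hence $\widetilde{\mu_1} = \widetilde{\mu_2} = 0$, giving
$$\widetilde{W}^{a,b}_2(0,\nu)^2 = a\vert\nu\vert \qquad \text{for every } \nu \in \mathcal{M}(X).$$
Combined with the triangle inequality for the metric $\widetilde{W}^{a,b}_2$ (also from Proposition~\ref{P-less measures for general Wassertein spaces}), this yields
$$\widetilde{W}^{a,b}_2(\mu_i,\mu^n) \geq \sqrt{a\vert\mu^n\vert} - \sqrt{a\vert\mu_i\vert}.$$
If $\vert\mu^n\vert \to \infty$ along a subsequence, the right-hand side would drive the functional to infinity, contradicting the minimizing property. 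Hence $\sup_n \vert\mu^n\vert < \infty$, and Prokhorov's theorem supplies a subsequence (still denoted $\{\mu^n\}$) converging in the weak*-topology to some $\mu^* \in \mathcal{M}(X)$.

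To finish, we must check that $\mu^*$ is admissible and attains the infimum. Since $X\setminus K$ is open, the weak* lower semicontinuity of mass on open sets gives $\mu^*(X\setminus K) \leq \liminf_n \mu^n(X\setminus K) = 0$, so $\supp(\mu^*) \subset K$. Tightness of $\{\mu^n\}$ together with weak* convergence allows us to apply Theorem~\ref{T-convergence in generalized Wasserstein} to conclude $\widetilde{W}^{a,b}_2(\mu^n,\mu^*) \to 0$. A further application of the triangle inequality then yields $\widetilde{W}^{a,b}_2(\mu_i,\mu^n) \to \widetilde{W}^{a,b}_2(\mu_i,\mu^*)$ for each $i$, and passing to the limit in the objective shows that $\mu^*$ realizes the infimum in $(B)$.

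The main technical step is the mass bound on the minimizing sequence. Once one notices that $\widetilde{W}^{a,b}_2(0,\cdot)^2$ grows linearly in the total mass, the triangle inequality delivers enough coercivity to rule out escape of mass; everything else is a clean Prokhorov argument combined with Theorem~\ref{T-convergence in generalized Wasserstein} and the observation that supports are preserved in the weak* limit.
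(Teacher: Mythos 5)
Your proof is correct and follows the same overall skeleton as the paper's (direct method: minimizing sequence, tightness from $\supp(\mu^n)\subset K$, a uniform mass bound, Prokhorov, Theorem \ref{T-convergence in generalized Wasserstein} to upgrade weak* convergence to $\widetilde{W}^{a,b}_2$-convergence, and the Portmanteau inequality on the open set $X\setminus K$ to preserve the support constraint). The one step where you genuinely diverge is the coercivity estimate. The paper obtains $a\vert\mu^n\vert\leq J(\mu^n)+a\sum_i\lambda_i\vert\mu_i\vert$ by plugging the admissible pair $(\varphi_1,\varphi_2)=(a,-a)$ into the Kantorovich duality formula of Theorem \ref{T-duality of generalized Wasserstein spaces}; you instead compute $\widetilde{W}^{a,b}_2(0,\nu)^2=a\vert\nu\vert$ directly from Proposition \ref{P-less measures for general Wassertein spaces} (the optimal submeasure below the zero measure must vanish) and then invoke the triangle inequality to get $\widetilde{W}^{a,b}_2(\mu_i,\mu^n)\geq\sqrt{a\vert\mu^n\vert}-\sqrt{a\vert\mu_i\vert}$. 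Both bounds force $\sup_n\vert\mu^n\vert<\infty$; yours is slightly weaker quantitatively (square roots rather than a linear bound) but entirely sufficient, and it has the advantage of not depending on the duality theorem of Section 3, so the existence result becomes logically independent of that machinery. The remaining steps match the paper's argument, including the observation that continuity of the metric $\widetilde{W}^{a,b}_2$ lets you pass to the limit in $J$.
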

\begin{proof}
For every $\mu\in\cM(X)$ such that $\text{supp}(\mu)\subset K$, let $J(\mu)=\sum_{i=1}^k\lambda_i \widetilde{W}^{a,b}_2\left(\mu_i,\mu\right)^2.$ Let $\left\{\mu^n\right\}_{n\in\mathbb{N}}$ be a minimizing sequence of $(B)$. For every $n\in\mathbb{N}$, let $x\not\in \text{supp}(\mu^n)$ then there exists an open neighborhood $U_x$ of $x$ such that $\mu^n\left(U_x\right)=0$. Since $X$ is separable and $\left\{U_x\right\}_{x\in X\backslash\text{supp}(\mu^n)}$ is an open cover of $X\minus\text{supp}(\mu^n)$, applying Lindel\"{o}f Theorem there is a countable subcover $\left\{U_{x_i}\right\}_i$. Therefore, $\mu^n\left(X\minus\text{supp}(\mu^n)\right)=0$. Moreover, $\text{supp}(\mu^n)\subset K$ for every $n\in \mathbb{N}$. Thus, for every $n\in N$, $\mu^n(X\minus K)=0$. It implies that $\left\{\mu^n\right\}_{n\in\mathbb{N}}$ is tight.

We now prove that $\left\{\mu^n\right\}_{n\in\mathbb{N}}$ is bounded. For every $n\in\mathbb{N}$ and every $i\in\{1,2,\ldots,k\}$, using Corollary \ref{C-flat metrics} (1) we get that \begin{align*}
\widetilde{W}^{a,b}_2\left(\mu^n,\mu_i\right)^2=\sup\bigg\{\int_X\varphi_1(x) d\mu^n(x)+\int_X\varphi_2(x) d\mu_i(x)|\left(\varphi_1,\varphi_2\right)\in\Phi_W\bigg\},
\end{align*}
We set $\varphi_1(x)=a, \varphi_2(x)=-a$ for every $x\in X$ then $$\lambda_i.\widetilde{W}^{a,b}_2\left(\mu^n,\mu_i\right)^2\geq \lambda_ia\mu^n(X)-\lambda_ia\mu_i(X)$$
This yields, $$\left\vert \mu^n\right\vert\leq \dfrac{1}{a}J\left(\mu^n\right)+\sum_i^k\lambda_i\vert \mu_i\vert,\text{ for every }n\in\mathbb{N}.$$
As $\mu_i\in\cM(X)$ for every $i\in \{1,2,\ldots,k\}$ and $J\left(\mu^n\right)$ is bounded, we obtain that $\left\{\mu^n\right\}_{n\in\mathbb{N}}$ is bounded. Therefore, applying Prokhorov's Theorem, passing to a subsequence we can assume that $\mu^n\rightarrow \mu$ as $n\rightarrow\infty$ in the weak*-topology for some $\mu\in\cM(X)$.

We now show that $\text{supp}(\mu)\subset K$. As $X\minus K$ is an open set, applying \cite[Theorem 6.1]{Par} we get that $$0=\liminf_{n\rightarrow \infty}\mu^n(X\minus K)\geq \mu(X\minus K).$$ 
Therefore $X\minus K\subset X\minus\text{supp}(\mu)$. Hence, $\text{supp}(\mu)\subset K$. 

Next, we will check that $\widetilde{W}^{a,b}_2(\mu^n,\mu)\rightarrow 0$ as $n\rightarrow\infty$. If $\vert \mu\vert=0$ then we are done. If $\vert\mu\vert>0$ then there exists $N>0$ such that $\vert\mu^n\vert>0$ for all $n\geq N$. For each $n\geq N$, we define $\nu^n:=\vert\mu\vert\mu^n/\vert \mu^n\vert$ then $\vert \nu^n\vert=\vert\mu\vert$. Therefore, \begin{align*}
\widetilde{W}^{a,b}_2(\mu^n,\mu)^2\leq a\vert\mu^n-\nu^n \vert+b^2W_2^2(\nu^n,\mu)=a\left\vert\vert\mu^n\vert-\vert \mu\vert\right\vert+b^2W_2^2(\nu^n,\mu).
\end{align*}
Moreover, since $\mu^n\rightarrow \mu$ as $n\rightarrow\infty$ one has $\nu^n\rightarrow\mu$ as $n\rightarrow\infty$. Observe that $\nu^n$ and $\mu$ are concentrated in 
compact set $K$, applying \cite[Definition 6.8 and Theorem 6.9]{V09} we obtain that $\lim_{n\rightarrow \infty}W_2(\nu^n,\mu)=0$. This yields, $$\limsup_{n\rightarrow \infty}\widetilde{W}^{a,b}_2(\mu^n,\mu)^2\leq a\lim_{n\rightarrow\infty}\left\vert\vert\mu^n\vert-\vert \mu\vert\right\vert+\lim_{n\rightarrow\infty}b^2W_2^2(\mu^n,\mu)=0.$$
Notice that $\liminf_{n\rightarrow\infty}\widetilde{W}^{a,b}_2(\mu^n,\mu)\geq 0$. Therefore, $\lim_{n\rightarrow\infty}\widetilde{W}^{a,b}_2(\mu^n,\mu)=0$. This implies that $\lim_{n\rightarrow\infty}J(\mu^n)=J(\mu)$. Hence, we get the result.
\end{proof}
\begin{definition}
Let $X$ be a locally compact, Polish metric space. For every integer $k\geq 2$, let $\mu_1,\ldots,\mu_2\in \mathcal{M}(X)$ such that $\text{supp}(\mu_i)$ is a compact subset of $X$, for every $i\in\{1,\ldots,k\}$. Let $k$ positive real numbers $\lambda_1,\ldots,\lambda_k$ such that $\sum_{i=1}^k\lambda_i=1$. We say that $\mu\in \cM(X)$ is a generalized Wasserstein barycenter of $\left(\mu_1,\ldots,\mu_k\right)$ with weights $\left(\lambda_1,\ldots,\lambda_k\right)$ if $\mu$ is a solution of $(B)$. We denote by $BC\left((\mu_i,\lambda_i)_{1\leq i\leq k}\right)$ the set of all generalized Wasserstein barycenters of $\left(\mu_1,\ldots,\mu_k\right)$ with weights $\left(\lambda_1,\ldots,\lambda_k\right)$.
\end{definition}

In general, barycenters in a generalized Wasserstein space are not unique. 
\begin{example}
Let $X=\mathbb{R},a=b=1$ and $\lambda_1=\lambda_2=1/2$. For every $x\geq 0$ let $\mu_1=\delta_x$ and $\mu_2=3\delta_x$. Then we have $\left\{\mu\in\cM(\mathbb{R})|\text{supp}(\mu)\subset\{x\}\right\}=\{q\delta_x|q\geq 0\}$. For every $q\geq 0$, let $(\widetilde{\mu}_1,\widetilde{\mu}_2)$ be an optimal for $\widetilde{W}^{1,1}_2(\delta_x,q\delta_x)$. Since $\vert\widetilde{\mu}_1\vert=\vert \widetilde{\mu}_2\vert, \widetilde{\mu}_1\leq \delta_x,\widetilde{\mu}_2\leq q\delta_x$, we must have $\widetilde{\mu}_1=\widetilde{\mu}_2=r\delta_x$ where $0\leq r\leq \min\{q,1\}$. Hence, we get that $$\widetilde{W}^{1,1}_2(\delta_x,q\delta_x)^2=\min\{q+1-2r|0\leq r\leq \min\{q,1\}\}.$$
Similarly, we also get that $$\widetilde{W}^{1,1}_2(3\delta_x,q\delta_x)^2=\min\{q+3-2s|0\leq s\leq \min\{q,3\}\}.$$
It is easy to check that $$\lambda_1.\min\{q+1-2r|0\leq r\leq \min\{q,1\}\}+\lambda_2.\min\{q+3-2s|0\leq s\leq \min\{q,3\}\}=1,$$
and the minimum is attained when $q\in[1,3]$. Therefore, $BC((\mu_1,\lambda_1),(\mu_2,\lambda_2))=\{q\delta_x|q\in[1,3]\}.$
\end{example}

We now prove the consistency of barycenters in generalized Wasserstein spaces which has been shown in \cite[Theorem 3.1]{Boissard} for the Wasserstein setting.
\begin{thm}\label{T-consistency}  
Let $(X,d)$ be a locally compact, Polish metric space. For every integer $k\geq 2$, let $\left\{\mu^n_i\right\}\subset \cM(X)$ be sequences converging in generalized Wasserstein distance to compactly supported measure $\mu_i\in \cM(X)$ for every  $i\in\{1,\ldots,k\}$. Let $K=\bigcup_{i=1}^k\text{supp}(\mu_i)$ and let $k$ positive integers $\lambda_1,\ldots,\lambda_k$ such that $\sum_{i=1}^k\lambda_i=1$. For each $n\in\mathbb{N}$, suppose that $\text{supp}(\mu_i^n)\subset K$ for every $i\in\{1,\ldots,k\}$. Then $BC\left((\mu_i^n,\lambda_i)_{1\leq i\leq k}\right)$ is a nonempty set for every $n\in\N$. Moreover, for every $n\in\N$, let $\mu_B^n\in BC\left((\mu_i^n,\lambda_i)_{1\leq i\leq k}\right)$ then the sequence $\left\{\mu^n_B\right\}$ is precompact in $(\cM(X),\widetilde{W}^{a,b}_2)$ and any its limit point is a generalized Wasserstein barycenter of $\left(\mu_1,\ldots,\mu_k\right)$ with weights $\left(\lambda_1,\ldots,\lambda_k\right)$. 
\end{thm}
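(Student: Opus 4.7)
The strategy is the natural "existence $+$ precompactness $+$ passage to the limit" argument for consistency of Fréchet-type means in the metric $\widetilde{W}^{a,b}_2$. Existence for each $n$ is immediate from Theorem~\ref{T-existence of barycenter} applied to $(\mu_i^n,\lambda_i)_{1\le i\le k}$, since each $\mu_i^n$ has compact support contained in $K$.

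Next I would establish precompactness of $\{\mu_B^n\}$ in $(\cM(X),\widetilde{W}^{a,b}_2)$. Because $\mathrm{supp}(\mu_B^n)\subset\bigcup_i\mathrm{supp}(\mu_i^n)\subset K$ and $K$ is compact, the sequence is automatically tight. For boundedness of the masses I would use the zero measure as a trivial competitor: $J_n(\mu_B^n)\le J_n(0)\le a\sum_i\lambda_i|\mu_i^n|$ (take $\widetilde\mu_1=\widetilde\mu_2=0$ in the definition of $\widetilde{W}^{a,b}_2(\mu_i^n,0)$), which is uniformly bounded since $|\mu_i^n|\to|\mu_i|$. Testing the duality formula of Theorem~\ref{T-duality of generalized Wasserstein spaces} against the constant pair $(\varphi_1,\varphi_2)=(a,-a)$, exactly as in the proof of Theorem~\ref{T-existence of barycenter}, yields $a|\mu_B^n|\le J_n(\mu_B^n)+a\sum_i\lambda_i|\mu_i^n|$, so $\{|\mu_B^n|\}$ is bounded. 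Prokhorov then produces a weak$*$-subsequential limit $\mu_B\in\cM(X)$; Portmanteau applied to the open set $X\setminus K$ shows $\mathrm{supp}(\mu_B)\subset K$; and Theorem~\ref{T-convergence in generalized Wasserstein} upgrades the weak$*$-convergence to $\widetilde{W}^{a,b}_2(\mu_B^n,\mu_B)\to 0$.

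To identify $\mu_B$ as a barycenter of $(\mu_1,\dots,\mu_k)$, observe that $\widetilde{W}^{a,b}_2$ is a metric (Proposition~\ref{P-less measures for general Wassertein spaces}), so the triangle inequality gives, for each fixed $i$ and each $\nu\in\cM(X)$ with $\mathrm{supp}(\nu)\subset K$,
\[
\widetilde{W}^{a,b}_2(\mu_i^n,\mu_B^n)\to\widetilde{W}^{a,b}_2(\mu_i,\mu_B),\qquad \widetilde{W}^{a,b}_2(\mu_i^n,\nu)\to\widetilde{W}^{a,b}_2(\mu_i,\nu).
\]
Both sequences are uniformly bounded thanks to the mass bounds, so squaring passes to the limit, yielding $J_n(\mu_B^n)\to J(\mu_B)$ and $J_n(\nu)\to J(\nu)$. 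Passing to the limit in the optimality inequality $J_n(\mu_B^n)\le J_n(\nu)$ then gives $J(\mu_B)\le J(\nu)$, identifying $\mu_B$ as a generalized Wasserstein barycenter of $(\mu_1,\dots,\mu_k)$ with weights $(\lambda_1,\dots,\lambda_k)$.

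The step I expect to require most care is precisely this final comparison: \emph{a priori}, using $\nu$ as a competitor for the $n$-th problem requires $\mathrm{supp}(\nu)\subset\bigcup_i\mathrm{supp}(\mu_i^n)$, while the hypothesis supplies only $\mathrm{supp}(\nu)\subset K$. The cleanest way to handle this is to run both the $n$-th and the limiting problem over the common admissible class $\{\mu\in\cM(X):\mathrm{supp}(\mu)\subset K\}$, which is consistent with the fact that $\bigcup_i\mathrm{supp}(\mu_i^n)\subset K$ by assumption; under this uniform interpretation every $\nu$ with $\mathrm{supp}(\nu)\subset K$ is simultaneously a competitor for every $n$, and the continuity calculation above then finishes the argument.
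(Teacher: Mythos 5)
Your proof is correct and follows essentially the same route as the paper: existence from Theorem~\ref{T-existence of barycenter}, tightness from $\mathrm{supp}(\mu_B^n)\subset K$, a mass bound via the dual pair $(\varphi_1,\varphi_2)=(a,-a)$ in Theorem~\ref{T-duality of generalized Wasserstein spaces}, Prokhorov plus Theorem~\ref{T-convergence in generalized Wasserstein} to extract a $\widetilde{W}^{a,b}_2$-convergent subsequence with limit supported in $K$, and passage to the limit in the optimality inequality. Your closing remark on admissibility is the only substantive difference: the paper silently uses the limiting barycenter (supported in $K$ but not necessarily in $\bigcup_i\mathrm{supp}(\mu_i^n)$) as a competitor for the $n$-th problem, and your uniform interpretation of the constraint set as $\{\mu:\mathrm{supp}(\mu)\subset K\}$ is precisely the reading that makes this step --- and the hypothesis $\mathrm{supp}(\mu_i^n)\subset K$ --- legitimate.
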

\begin{proof} Since $\text{supp}(\mu_i^n)\subset K$ and $K$ is compact, one has $\text{supp}(\mu_i^n)$ is compact for every $n\in\mathbb{N}$ and every $i\in\{1,\ldots,k\}$. Therefore, $BC\left((\mu_i^n,\lambda_i)_{1\leq i\leq k}\right)$ is a nonempty set for every $n\in \mathbb{N}$, this follows from Theorem \ref{T-existence of barycenter}. 

We now prove the second part. Since $\mu_B^n\in BC\left((\mu_i^n,\lambda_i)_{1\leq i\leq k}\right)$, we get that $\text{supp}(\mu_B^n)\subset \bigcup_{i=1}^k\text{supp}(\mu_i^n)\subset K$,  for every $n\in \N$. Then $\mu_B^n\left(X\minus K\right)=0$ for every $n\in \N$. Therefore, $\left\{\mu_B^n\right\}$ is tight. Let $\mu_B\in BC\left((\mu_i,\lambda_i)_{1\leq i\leq k}\right)$. Since $\widetilde{W}^{a,b}_2\left(\mu_i^n,\mu_i\right)\rightarrow 0$ as $n\rightarrow \infty$ for every $i\in\{1,\ldots,k\}$ we get that $$\lim_{n\rightarrow\infty}\widetilde{W}^{a,b}_2\left(\mu_B,\mu_i^n\right)=\widetilde{W}^{a,b}_2\left(\mu_B,\mu_i\right)<\infty$$
Therefore, $\{\widetilde{W}^{a,b}_2\left(\mu_B,\mu_i^n\right)\}_n$ is bounded for every $i\in\{1,\ldots,k\}$. Moreover, \begin{align}\label{T-consistency inequality}
\sum_{i=1}^k\lambda_i \widetilde{W}^{a,b}_2\left(\mu_B^n,\mu_i^n\right)^2\leq \sum_{i=1}^k\lambda_i \widetilde{W}^{a,b}_2\left(\mu_B,\mu_i^n\right)^2,\text{ for every }n\in \N.
\end{align}
This yields, $\widetilde{W}^{a,b}_2\left(\mu_B^n,\mu_i^n\right)$ is bounded for every $i\in\{1,\ldots,k\}$. As $\mu^n_i\rightarrow \mu_i$ as $n\rightarrow\infty$ in the weak*-topology, applying \cite[Theorem 6.1]{Par} we get that $\lim_{n\rightarrow\infty}\mu_i^n(X)=\mu_i(X)<\infty$. Thus, $\left\{\mu_i^n\right\}$ is bounded for every $i\in\{1,\ldots,k\}$. Therefore, using Corollary \ref{C-flat metrics} (1) and by the same arguments as in the proof of Theorem \ref{T-existence of barycenter} we obtain that $\left\{\mu_B^n\right\}$ is bounded. Hence, applying Prokhorov's Theorem, passing to a subsequence we can assume that $\mu_B^n\rightarrow \widehat{\mu}_B$ as $n\rightarrow \infty$ in the weak*-topology for some $\widehat{\mu}_B\in\cM(X)$. Observe that, from $\mu^n_B\left(X\minus K\right)=0$ for every $n\in\N$ and $X\minus K$ is an open set, we get that $\widehat{\mu}_B(X\minus K)=0$ and thus $\text{supp}(\widehat{\mu}_B)\subset K$. By the same arguments in the proof of Theorem \ref{T-existence of barycenter} we also have $\widetilde{W}^{a,b}_2\left(\mu_B^n,\widehat{\mu}_B\right)\rightarrow 0$ as $n\rightarrow\infty$. This implies that the sequence $\left\{\mu^n_B\right\}$ is precompact in generalized Wasserstein topology and we also get that $$\lim_{n\rightarrow \infty}\widetilde{W}^{a,b}_2\left(\mu_B^n,\mu_i^n\right)=\widetilde{W}^{a,b}_2\left(\widehat{\mu}_B,\mu_i\right),\text{ for every }i\in\{1,\ldots,k\}.$$    
Hence, since (\ref{T-consistency inequality}) we get that \begin{align*}
\sum_{i=1}^k\lambda_i \widetilde{W}^{a,b}_2\left(\widehat{\mu}_B,\mu_i\right)^2&=\lim_{n\rightarrow\infty}\sum_{i=1}^k\lambda_i \widetilde{W}^{a,b}_2\left(\mu_B^n,\mu_i^n\right)^2\\
&\leq \lim_{n\rightarrow\infty}\sum_{i=1}^k\lambda_i \widetilde{W}^{a,b}_2\left(\mu_B,\mu_i^n\right)^2\\
&= \sum_{i=1}^k\lambda_i \widetilde{W}^{a,b}_2\left(\mu_B,\mu_i\right)^2.
\end{align*}
Therefore, $\widehat{\mu}_B\in BC\left((\mu_i,\lambda_i)_{1\leq i\leq k}\right)$. 
\end{proof}

Next, we will study the a dual problem of problem $(B)$. For every $\lambda>0$ and every function $f\in C_b(K)$ such that $f(x)\leq \lambda a$ for every $x\in K$ where $K=\bigcup _{i=1}^k\text{supp}(\mu_i)$, we define 
 $S_\lambda f(x):=\inf_{y\in K}\left\{\lambda b^2 d^2(x,y)-f(y)\right\}$ and $\overline{S}_\lambda f(x):=\min\left\{S_\lambda f(x),\lambda a\right\}$. For every integer $k\geq 2$ and for each $i\in\{1,2,\ldots,k\}$ we define function $H_i:C_b(K)\rightarrow \overline{\mathbb{R}}$ by 
\[H_i(f):=\left\{\begin{array}{l}-\int_K \overline{S}_{\lambda_i}f(x)d\mu_i(x) \text{ if } f\in F_{\lambda_i}\\+\infty \text{ otherwise, }\end{array}\right.\] where $F_{\lambda_i}:=\left\{f\in C_b(K)| f(x)\leq \lambda_ia\text{ for every }x\in K\right\}$. Then $H_i$ is a convex function on $F_{\lambda_i}$.   

We denote by $\cM_s(K)$ (resp. $\cM_c(K))$ the space of signed (resp. nonnegative) Radon measures $\mu$ with finite mass on $X$ such that $\mu$ is concentrated on $K$, i.e. $\mu(X\minus K)=0$. Then $\cM_s(K)$ is the dual space of $C_b(K)$, since $K$ is compact. For every $\mu\in \cM_s(K)$, the Legendre-Fenchel transform of $H_i$ is \begin{align*}
H_i^*(\mu)&=\sup\left\{\int_K f(x)d\mu(x)-H_i(f)| f\in C_b(K)\right\}\\
&=\sup\left\{\int_K f(x)d\mu(x)-H_i(f)| f\in F_{\lambda_i}\right\}\\
&=\sup\left\{\int_K f(x)d\mu(x)+\int_K\overline{S}_{\lambda_i}f(x)d\mu_i(x)| f\in F_{\lambda_i}\right\}.
\end{align*} 

We consider the following problem $$(B^*)\;\sup\left\{ \sum_{i=1}^k\int_K\overline{S}_{\lambda_i}f_i(x)d\mu_i(x)|f_i\in F_{\lambda_i},\sum_{i=1}^kf_i=0\right\}. $$
\begin{lem}\label{L-inf B>sup B*} Let $X$ be a locally compact, Polish metric space then $\inf(B)\geq \sup (B^*)$.
\end{lem}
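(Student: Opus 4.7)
My plan is to argue weak duality by establishing, for every admissible $\mu$ in $(B)$ and every admissible tuple $(f_i)_{1\leq i\leq k}$ in $(B^*)$, the single pointwise inequality
\[
\sum_{i=1}^k \lambda_i\, \widetilde{W}^{a,b}_2(\mu_i,\mu)^2 \;\geq\; \sum_{i=1}^k \int_K \overline{S}_{\lambda_i}f_i(x)\, d\mu_i(x),
\]
and then pass to $\inf_\mu$ on the left and $\sup_{(f_i)}$ on the right. The constraint $\sum_i f_i\equiv 0$ will be used decisively to annihilate a residual term $\int_K\bigl(\sum_i f_i\bigr)d\mu$ that will emerge from a per-$i$ estimate; so the heart of the argument is to control $\int_K \overline{S}_{\lambda_i}f_i\, d\mu_i+\int_K f_i\, d\mu$ by $\lambda_i\widetilde{W}^{a,b}_2(\mu_i,\mu)^2$ for each $i$.

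For this per-$i$ step I would pass to the Entropy-Transport reformulation. Set $c(x,y):=b^2 d^2(x,y)$. By Lemma \ref{L-Wasserstein is ET} one has $\widetilde{W}^{a,b}_2(\mu_i,\mu)^2 = E_c(\mu_i,\mu)$, and by Lemma \ref{L-identity for entropy of smaller measures} the infimum defining $E_c(\mu_i,\mu)$ may be taken over $\gamma\in M^{\leq}(\mu_i,\mu)$. Fix any such $\gamma$ with marginals $\gamma_1\leq\mu_i$, $\gamma_2\leq\mu$, and split
\[
\int_K \overline{S}_{\lambda_i}f_i\, d\mu_i = \int_K \overline{S}_{\lambda_i}f_i\, d\gamma_1 + \int_K \overline{S}_{\lambda_i}f_i\, d(\mu_i-\gamma_1),
\]
and symmetrically for $\int_K f_i\, d\mu$. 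The two ``defect'' pieces are controlled using $\overline{S}_{\lambda_i}f_i\leq \lambda_i a$ (by definition of $\overline{S}$) and $f_i\leq \lambda_i a$ (since $f_i\in F_{\lambda_i}$), bounding their sum by $\lambda_i a\bigl(|\mu_i-\gamma_1|+|\mu-\gamma_2|\bigr)$. The two ``transport'' pieces recombine as $\int_{K\times K}\bigl[\overline{S}_{\lambda_i}f_i(x)+f_i(y)\bigr]\, d\gamma(x,y)$, and the defining inequality $\overline{S}_{\lambda_i}f_i(x)+f_i(y)\leq S_{\lambda_i}f_i(x)+f_i(y)\leq \lambda_i b^2 d^2(x,y)$ bounds this by $\lambda_i\int_{K\times K}c\, d\gamma$. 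Adding the two bounds yields
\[
\int_K \overline{S}_{\lambda_i}f_i\, d\mu_i+\int_K f_i\, d\mu \;\leq\; \lambda_i\, E_c(\mu_i,\mu\,|\,\gamma),
\]
and taking $\inf_{\gamma\in M^{\leq}(\mu_i,\mu)}$ produces the per-$i$ bound $\int_K \overline{S}_{\lambda_i}f_i\, d\mu_i+\int_K f_i\, d\mu \leq \lambda_i\widetilde{W}^{a,b}_2(\mu_i,\mu)^2$.

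Summing over $i=1,\dots,k$ and invoking $\sum_i f_i\equiv 0$ to kill $\int_K\bigl(\sum_i f_i\bigr)d\mu$ delivers the target pointwise inequality, and passing to $\inf_\mu$ and $\sup_{(f_i)}$ completes the proof. I do not anticipate a genuine obstacle here: measurability of $\overline{S}_{\lambda_i}f_i$ is automatic on compact $K$ (as an infimum of continuous functions it is upper semicontinuous, hence Borel), and the reduction to $M^{\leq}$ is precisely what Lemma \ref{L-identity for entropy of smaller measures} furnishes. The genuinely delicate direction, strong duality $\inf(B)=\sup(B^*)$, would require further work and is outside the scope of this lemma.
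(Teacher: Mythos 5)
Your proposal is correct and takes essentially the same route as the paper: both reduce the lemma to the per-index inequality $\int_K f_i\,d\mu+\int_K\overline{S}_{\lambda_i}f_i\,d\mu_i\le\lambda_i\widetilde{W}^{a,b}_2(\mu,\mu_i)^2$, proved by testing against a sub-marginal plan $\gamma\in M^{\le}$ and using $f_i\le\lambda_i a$, $\overline{S}_{\lambda_i}f_i\le\lambda_i a$ on the mass defects together with $\overline{S}_{\lambda_i}f_i(x)+f_i(y)\le\lambda_i b^2 d^2(x,y)$ on the transported part, and then summing over $i$ with $\sum_i f_i=0$. The only difference is cosmetic: the paper works with the optimal plan and writes the defect terms via Radon--Nikodym densities $\varphi_1,\varphi_2$ and a pointwise inequality, whereas you split each integral into its $\gamma$-part and $(\mu-\gamma)$-part and take the infimum over $\gamma$ at the end; the underlying estimate is identical.
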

\begin{proof} For $i=1,2,\ldots,k$ let any $f_i\in F_{\lambda_i}$ such that $\sum_{i=1}^kf_i=0$. Then $\overline{S}_{\lambda_i}f_i(x)+f_i(y)\leq \lambda_ib^2d^2(x,y)$ for every $x,y\in K$ and every $i\in \{1,2,\ldots,k\}$. For every $\mu\in\cM_c(K)$, let $\boldsymbol{\gamma}^i\in M^\leq \left(\mu,\mu_i\right)$ be an optimal plan for $\widetilde{W}^{a,b}_2\left(\mu,\mu_i\right)$. Since $\mu_i$ is concentrated on $K$ for every $i=1,\ldots,k$, we get that $$\widetilde{W}^{a,b}_2\left(\mu,\mu_i\right)^2=a\left( \mu-\pi_\sharp^1\boldsymbol{\gamma}^i\right)(K)+a\left( \mu_i-\pi_\sharp^2\boldsymbol{\gamma}^i\right)(K)+b^2\int_{K\times K}d^2(x,y)d\boldsymbol{\gamma}^i(x,y).$$
As $\boldsymbol{\gamma}^i\in M^\leq \left(\mu,\mu_i\right)$, by Radon-Nikodym Theorem there exist measurable functions $\varphi_1,\varphi_2:K\rightarrow [0,+\infty)$ such that $\pi_\sharp^1\boldsymbol{\gamma}^i=\varphi_1\mu$, $\pi_\sharp^2\boldsymbol{\gamma}^i=\varphi_2\mu_i$ and $\varphi_1\leq 1 \;\mu$-a.e, $\varphi_2\leq 1 \;\mu_i$-a.e. Therefore, we get that \begin{align*}
\widetilde{W}^{a,b}_2\left(\mu,\mu_i\right)^2 &= a\int_K\left(1-\varphi_1\right)d\mu+a\int_K\left(1-\varphi_2\right)d\mu_i+b^2\int_{K\times K}d^2(x,y)d\boldsymbol{\gamma}^i(x,y)\\
& \geq a\int_K\left(1-\varphi_1\right)d\mu+a\int_K\left(1-\varphi_2\right)d\mu_i+\dfrac{1}{\lambda_i}\int_{K\times K}\left[f_i(x)+\overline{S}_{\lambda_i}f_i(y)\right]d\boldsymbol{\gamma}^i(x,y)\\
& = \int_K\left[a\left(1-\varphi_1\right)+\dfrac{1}{\lambda_i}f_i.\varphi_1\right]d\mu+\int_K\left[a\left(1-\varphi_2\right)+\dfrac{1}{\lambda_i}\overline{S}_{\lambda_i}f_i.\varphi_2\right]d\mu_i.
\end{align*}
Moreover, $\varphi_1(x),\varphi_2(x)\geq 0$ for every $x\in X$ and $\varphi_1\leq 1 \;\mu$-a.e, $\varphi_2\leq 1 \;\mu_i$-a.e, $f_i(x)/\lambda_i\leq a$, $\overline{S}_{\lambda_i}f_i(x)/\lambda_i\leq a$ for every $x\in K$. Therefore, we obtain that \begin{align*}
a\left(1-\varphi_1(x)\right)+\left(f_i(x)/\lambda_i\right).\varphi_i(x)\geq f_i(x)/\lambda_i,\;\mu-\text{a.e}, \\ 
a\left(1-\varphi_2(x)\right)+\left(\overline{S}_{\lambda_i}f_i(x)/\lambda_i\right).\varphi_2(x)\geq \overline{S}_{\lambda_i}f_i(x)/\lambda_i,\;\mu_i-\text{a.e}.
\end{align*}  
Hence, for every $i\in \{1,2,\ldots,k\}$, we get that 
\begin{align}\label{L-inf B>sup B* inequality}
\lambda_i\widetilde{W}^{a,b}_2\left(\mu,\mu_i\right)^2\geq \int_K f_i(x)d\mu(x)+\int_K\overline{S}_{\lambda_i}f_i(x)d\mu_i(x).
\end{align}
Thus, \begin{align*}
\sum_{i=1}^k\lambda_i\widetilde{W}^{a,b}_2\left(\mu,\mu_i\right)^2&\geq \sum_{i=1}^k\int_K f_i(x)d\mu(x)+\sum_{i=1}^k\int_K\overline{S}_{\lambda_i}f_i(x)d\mu_i(x)\\
&=\sum_{i=1}^k\int_K\overline{S}_{\lambda_i}f_i(x)d\mu_i(x).
\end{align*}
This yields,
$$\inf\left\{\sum_{i=1}^k\lambda_i\widetilde{W}^{a,b}_2\left(\mu,\mu_i\right)^2|\text{ supp}(\mu)\subset K\right\}\geq \sum_{i=1}^k\int_K\overline{S}_{\lambda_i}f_i(x)d\mu_i(x).$$
Hence, we get the result.
\end{proof}
\begin{lem}\label{L-dual functional}
Let $X$ be a locally compact, Polish metric space. Then for every $i\in\{1,2,\ldots,k\}$ we have $H_i^*(\mu)=\lambda_i\widetilde{W}^{a,b}_2\left(\mu,\mu_i\right)^2$ if $\mu\in\cM_c(K)$ and $+\infty$ otherwise.
\end{lem}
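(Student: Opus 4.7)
My plan is to establish the identity by splitting into cases: when $\mu$ is nonnegative on $K$ (so $\mu\in\cM_c(K)$) and when it has a nontrivial negative part. In the first case I will prove the two opposing inequalities separately; in the second I will exhibit an explicit family of test functions forcing $H_i^*(\mu)=+\infty$.

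For $\mu\in\cM_c(K)$, the upper bound $H_i^*(\mu)\leq \lambda_i\widetilde W^{a,b}_2(\mu,\mu_i)^2$ is immediate from inequality (\ref{4.1}) of the preceding lemma, which already gives $\int_K f\,d\mu+\int_K \overline S_{\lambda_i}f\,d\mu_i\leq\lambda_i\widetilde W^{a,b}_2(\mu,\mu_i)^2$ for every $f\in F_{\lambda_i}$. For the reverse inequality I apply the Kantorovich duality of Theorem \ref{T-duality of generalized Wasserstein spaces} at $p=2$ to $(\mu,\mu_i)$. Given $\varepsilon>0$, select $(\varphi_1,\varphi_2)\in\Phi_W$ with $\int_X I(\varphi_1)\,d\mu+\int_X I(\varphi_2)\,d\mu_i\geq \widetilde W^{a,b}_2(\mu,\mu_i)^2-\varepsilon$. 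Since $I(\varphi)=\varphi$ for $\varphi\in[-a,a]$ and $I(\varphi)=a$ for $\varphi\geq a$, the truncation $\varphi_j\mapsto\min(\varphi_j,a)$ preserves the $I$-integrals and only strengthens the constraint $\varphi_1+\varphi_2\leq (bd)^2$, so I may assume $-a\leq\varphi_j\leq a$ throughout $X$. Now set $f:=\lambda_i\varphi_1\vert_K\in F_{\lambda_i}$. The admissibility condition $\varphi_1(y)+\varphi_2(x)\leq b^2 d^2(x,y)$ rearranges to $\lambda_i\varphi_2(x)\leq \lambda_i b^2 d^2(x,y)-f(y)$ for every $y\in K$; taking the infimum over $y\in K$ and combining with $\lambda_i\varphi_2(x)\leq \lambda_i a$ gives $\lambda_i\varphi_2(x)\leq \overline S_{\lambda_i}f(x)$ on $K$. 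Since $\mu$ and $\mu_i$ are concentrated on $K$, integrating yields
$$\int_K f\,d\mu + \int_K \overline S_{\lambda_i}f\, d\mu_i\;\geq\; \lambda_i\!\int_X \varphi_1\,d\mu + \lambda_i\!\int_X \varphi_2\,d\mu_i\;\geq\; \lambda_i\widetilde W^{a,b}_2(\mu,\mu_i)^2-\lambda_i\varepsilon,$$
and sending $\varepsilon\downarrow 0$ completes the first case.

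For $\mu\in\cM_s(K)\setminus\cM_c(K)$, the measure $\mu$ is signed and fails to be nonnegative on the compact set $K$, so by the Riesz correspondence between $\cM_s(K)$ and $C(K)^*$ there exists $g\in C(K)$ with $g\geq 0$ and $\int_K g\,d\mu<0$. For every $M>0$ the function $f_M:=-Mg$ satisfies $f_M\leq 0\leq \lambda_i a$ and so belongs to $F_{\lambda_i}$; the integrand $\lambda_i b^2 d^2(x,y)+Mg(y)$ defining $S_{\lambda_i}f_M$ is pointwise nonnegative, so $\overline S_{\lambda_i}f_M\geq 0$ on $K$ and $\int_K\overline S_{\lambda_i}f_M\,d\mu_i\geq 0$. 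Thus $\int_K f_M\,d\mu+\int_K\overline S_{\lambda_i}f_M\,d\mu_i\geq -M\int_K g\,d\mu\to +\infty$ as $M\to\infty$, forcing $H_i^*(\mu)=+\infty$. The main obstacle I anticipate is the lower bound in the first case: converting the Kantorovich dual pair $(\varphi_1,\varphi_2)$ into a single admissible $f$ whose $\overline S_{\lambda_i}$-transform dominates $\lambda_i\varphi_2$, which requires both the truncation argument and the happy coincidence that the outer cap $\lambda_i a$ built into $\overline S_{\lambda_i}$ matches the ceiling $a$ coming from the Kantorovich side via $I$.
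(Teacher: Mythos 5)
Your proposal is correct and follows essentially the same route as the paper: inequality (\ref{4.1}) gives the upper bound, Kantorovich duality (Theorem \ref{T-duality of generalized Wasserstein spaces}) applied to $(\mu,\mu_i)$ with the substitution $f=\lambda_i\varphi_1$ and the observation $\lambda_i\varphi_2\leq \overline{S}_{\lambda_i}f$ gives the lower bound, and a scaled sign-definite test function forces $H_i^*(\mu)=+\infty$ off $\cM_c(K)$. The only cosmetic differences are your explicit $\varepsilon$-and-truncation step (the paper instead notes that the constraint at $x=y$ already forces $\varphi_j\in[-a,a]$) and the opposite sign convention for $g$ in the signed case.
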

\begin{proof} If $\mu\in \cM_s(K)\minus \cM_c(K)$ then there exists $g\in C_b(K),g\leq 0$ such that $\int_K g(x)d\mu(x)>0$. For every $t\in \R,t\geq 0$ let $f=t.g$ then $f\in F_{\lambda_i}$ and $\overline{S}_{\lambda_i}(tf(x))\geq 0$ for every $x\in K$. Therefore, $$H_i^*(\mu)\geq \sup_{t\geq 0}\int_K fd\mu=+\infty.$$

We now consider $\mu\in \cM_c(K)$. Since (\ref{L-inf B>sup B* inequality}), it is clear that $\lambda_i\widetilde{W}^{a,b}_2\left(\mu,\mu_i\right)^2\geq H_i^*(\mu)$. So we need to prove that $\lambda_i\widetilde{W}^{a,b}_2\left(\mu,\mu_i\right)^2\leq H_i^*(\mu)$. We define
\begin{align*}
\Phi_K:=\bigg\{(\varphi_1,\varphi_2)\in C_b(K)\times C_b(K)&:\varphi_1(x)+\varphi_2(y)\leq b^2d^2(x,y), \varphi_1(x),\varphi_2(y)\geq -a,\\
&\text{ for every } x,y\in K\bigg\}.
\end{align*}
Let any $\left(\varphi_1,\varphi_2\right)\in \Phi_K$ then $\lambda_i\varphi_1(x)+\lambda_i\varphi_2(y)\leq \lambda_ib^2d^2(x,y)$ for every $x,y\in K$ and every $i=1,\ldots,k$. Therefore, $\lambda_i\varphi_2(y)\leq S_{\lambda_i}\left(\lambda_i\varphi_1(y)\right)$ for every $y\in K$. Observe that $\varphi_2(y)\in [-a,a]$ for every $y\in K$, we get that $\lambda_i\varphi_2(y)\leq \overline{S}_{\lambda_i}\left(\lambda_i\varphi_1(y)\right)$ for every $y\in K$. As $\lambda_i\varphi_1(x)\leq \lambda_ia$ for every $x\in K$, one has $\lambda_i\varphi_1\in F_{\lambda_i}$. Hence, we obtain that 
\begin{align*}
\int_K\lambda_i\varphi_1(x)d\mu(x)+\int_K\lambda_i\varphi_2(y)d\mu_i(y)&\leq \int_K\lambda_i\varphi_1(x)d\mu(x)+\int_K\overline{S}_{\lambda_i}\left(\lambda_i\varphi_1(y)\right)d\mu_i(y)\\
&\leq H_i^*(\mu).
\end{align*}
Applying Corollary \ref{C-flat metrics} (1) we get that $$\widetilde{W}^{a,b}_2\left(\mu,\mu_i\right)^2=\sup_{\left(\varphi_1,\varphi_2\right)\in \Phi_K}\left\{\int_K\varphi_1(x)d\mu(x)+\int_K\varphi_2(y)d\mu_i(y)\right\}\leq \dfrac{1}{\lambda_i}H_i^*(\mu).$$
Hence, $\lambda_i\widetilde{W}^{a,b}_2\left(\mu,\mu_i\right)^2\leq H_i^*(\mu)$ for every $\mu\in \cM_c(K)$ and every $i\in\{1,2,\ldots,k\}$.
\end{proof}

Let $F:=\left\{f\in C_b(K)|f(x)\leq a\text{ for every }x\in K\right\}$. We define $H:C_b(K)\rightarrow\overline{\mathbb{R}}$ by $H(f)=\inf\left\{\sum_{i=1}^kH_i\left(f_i\right)|f_i\in F_{\lambda_i},\sum_{i=1}^kf_i=f\right\}$ if $f\in F$ and $+\infty$ otherwise.

\begin{lem}\label{L-sum of dual functions}
$H$ is convex on $F$ and $H^*(\mu)=\sum_{i=1}^kH_i^*(\mu)$ for every $\mu\in M_s(K)$.
\end{lem}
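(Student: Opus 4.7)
The plan is to recognize $H$ as the infimal convolution of the convex functions $H_1,\dots,H_k$, so that both the convexity of $H$ and the identity $H^{*}=\sum_{i}H_{i}^{*}$ follow from standard convex duality arguments. Before invoking these, I would first record two preliminary facts: each $F_{\lambda_i}$ is a convex subset of $C_b(K)$ (a half-space), and for $f \in F$ the decomposition $f_i := \lambda_i f$ lies in $F_{\lambda_i}$ with $\sum_i f_i = f$, so the infimum defining $H(f)$ is taken over a non-empty set and hence $H<+\infty$ on $F$.

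Next I would verify (as the problem statement already asserts) that each $H_i$ is convex on $F_{\lambda_i}$. For $f,g\in F_{\lambda_i}$ and $t\in[0,1]$, the map $f\mapsto S_{\lambda_i}f$ is concave because
$$S_{\lambda_i}(tf+(1-t)g)(x)=\inf_{y\in K}\{\lambda_i b^2 d^2(x,y)-tf(y)-(1-t)g(y)\}\ge t\,S_{\lambda_i}f(x)+(1-t)\,S_{\lambda_i}g(x),$$
and taking the pointwise minimum with the constant $\lambda_i a$ preserves concavity. Integrating against the nonnegative measure $\mu_i$ and negating yields convexity of $H_i$. With this in hand, convexity of $H$ follows by the standard infimal-convolution argument: given $f,g\in F$ with near-optimal decompositions $f=\sum_i f_i$, $g=\sum_i g_i$ ($f_i,g_i\in F_{\lambda_i}$), the combination $tf_i+(1-t)g_i$ lies in $F_{\lambda_i}$ and sums to $tf+(1-t)g$, so $H(tf+(1-t)g)\le \sum_i H_i(tf_i+(1-t)g_i)\le t\sum_i H_i(f_i)+(1-t)\sum_i H_i(g_i)$, and taking infima over the two decompositions gives the convexity inequality.

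For the duality formula, I would unfold definitions. Since $H\equiv +\infty$ off $F$, for any $\mu\in \cM_s(K)$,
$$H^{*}(\mu)=\sup_{f\in F}\Bigl\{\int_K f\,d\mu-H(f)\Bigr\}=\sup_{f\in F}\;\sup_{\substack{f_i\in F_{\lambda_i}\\ \sum_i f_i=f}}\Bigl\{\int_K f\,d\mu-\sum_i H_i(f_i)\Bigr\}.$$
Whenever $f=\sum_i f_i$ with each $f_i\in F_{\lambda_i}$, the bound $f\le \sum_i\lambda_i a=a$ is automatic, so the constraint $\sum_i f_i\in F$ is redundant and the two suprema collapse to an unconstrained supremum over $k$-tuples $(f_1,\dots,f_k)\in \prod_i F_{\lambda_i}$. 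Using $\int_K f\,d\mu=\sum_i\int_K f_i\,d\mu$ along any such decomposition, the supremum separates:
$$H^{*}(\mu)=\sum_{i=1}^k\sup_{f_i\in F_{\lambda_i}}\Bigl\{\int_K f_i\,d\mu-H_i(f_i)\Bigr\}=\sum_{i=1}^k H_i^{*}(\mu).$$

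I do not expect a serious obstacle: the argument is entirely formal convex analysis once the convexity of the one-variable functionals $H_i$ is established. The only point requiring care is the algebraic check that $F_{\lambda_i}$-membership of each $f_i$ automatically forces $\sum_i f_i\in F$, which is precisely where the normalization $\sum_i\lambda_i=1$ enters and makes the inf-convolution structure clean.
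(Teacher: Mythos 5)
Your proposal is correct and follows essentially the same route as the paper: the convexity argument via convex combinations of decompositions is identical, and your ``collapse the iterated supremum into an unconstrained supremum over tuples and separate'' is the same observation the paper carries out as two inequalities (each tuple $(f_i)\in\prod_i F_{\lambda_i}$ automatically yields $\sum_i f_i\in F$, so the constraint is redundant). You additionally supply the proof that each $H_i$ is convex via concavity of $f\mapsto \overline{S}_{\lambda_i}f$, which the paper asserts without proof; that is a welcome completion rather than a divergence.
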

\begin{proof}
For every $g_1,g_2\in F$ and every $t\in [0,1]$ we will check that $H\left(tg_1+(1-t)g_2\right)\leq tH(g_1)+(1-t)H(g_2)$. Let any $\overline{f}_i,\widehat{f}_i\in F_{\lambda_i}$ such that $\sum_{i=1}^k\overline{f}_i=g_1$ and $\sum_{i=1}^k\widehat{f}_i=g_2$ then $t\overline{f}_i+(1-t)\widehat{f}_i\in F_{\lambda_i}$ and $\sum_{i=1}^k\left[t\overline{f}_i+(1-t)\widehat{f}_i\right]=tg_1+(1-t)g_2$. As $H_i$ is convex on $F_{\lambda_i}$ for every $i=1,\ldots,k$, we get that \begin{align*}
t\sum_{i=1}^kH_i\left(\overline{f}_i\right)+(1-t)\sum_{i=1}^kH_i\left(\widehat{f}_i\right)&=\sum_{i=1}^k\left[tH_i\left(\overline{f}_i\right)+(1-t)H_i\left(\widehat{f}_i\right)\right]\\
&\geq \sum_{i=1}^kH_i\left(t\overline{f}_i+(1-t)\widehat{f}_i\right)\\
&\geq H\left(tg_1+(1-t)g_2\right).
\end{align*}
Therefore, $H\left(tg_1+(1-t)g_2\right)\leq tH(g_1)+(1-t)H(g_2)$. It implies that $H$ is convex on $F$.

We now show that $H^*(\mu)=\sum_{i=1}^kH_i^*(\mu)$ for every $\mu\in M_s(K)$. For every $\mu\in \cM_s(K)$, by definition of the Legendre-Fenchel one has \begin{align*}
H^*(\mu)&=\sup_{f\in C_b(K)}\left\{\int_Kfd\mu-H(f)\right\}\\
&=\sup_{f\in F}\left\{\int_Kfd\mu-H(f)\right\}\\
& = \sup_{f\in F}\left\{\int_Kfd\mu-\inf\left\{\sum_{i=1}^kH_i(f_i)|f_i\in F_{\lambda_i},\sum_{i=1}^kf_i=f\right\}\right\}\\
& = \sup_{f\in F}\left\{\int_Kfd\mu+\sup\left\{\sum_{i=1}^k\int_K\overline{S}_{\lambda_i}f_id\mu_i|f_i\in F_{\lambda_i},\sum_{i=1}^kf_i=f\right\}\right\}.
\end{align*} 
For every $f_i\in F_{\lambda_i}$ let $f=\sum_{i=1}^kf_i$ then $f\in F$. Therefore, for every $\mu\in \cM_s(K)$ we get that \begin{align*}
\sum_{i=1}^k\left(\int_Kf_i(x)d\mu(x)+\int_K\overline{S}_{\lambda_i}f_i(x)d\mu_i(x)\right)&=\int_Kf(x)d\mu(x)+\sum_{i=1}^k\int_K\overline{S}_{\lambda_i}f_i(x)d\mu_i(x)\\
&\leq H^*(\mu).
\end{align*}
This yields,\begin{align*}
\sum_{i=1}^kH_i^*(\mu)& = \sum_{i=1}^k\sup\left\{\int_Kf_i(x)d\mu(x)+\int_K\overline{S}_{\lambda_i}f_i(x)d\mu_i(x)|f_i\in F_{\lambda_i}\right\}\\
& = \sup\left\{\sum_{i=1}^k\left(\int_Kf_i(x)d\mu(x)+\int_K\overline{S}_{\lambda_i}f_i(x)d\mu_i(x)\right)|f_i\in F_{\lambda_i}\right\}\\
&\leq H^*(\mu).
\end{align*}
Conversely, for every $f\in F$ let $G:=\left\{(f_1,\ldots,f_k)|f_i\in F_{\lambda_i},\sum_{i=1}^kf_i=f\right\}$. Then we get that
\begin{align*}
\int_Kfd\mu+\sup_{(f_1,\ldots,f_k)\in G}\sum_{i=1}^k\int_k\overline{S}_{\lambda_i}f_id\mu_i& =\sup_{(f_1,\ldots,f_k)\in G}\left\{\int_Kfd\mu+\sum_{i=1}^k\int_k\overline{S}_{\lambda_i}f_id\mu_i\right\}\\
& = \sup_{(f_1,\ldots,f_k)\in G}\left\{\int_K\sum_{i=1}^kf_id\mu+\sum_{i=1}^k\int_k\overline{S}_{\lambda_i}f_id\mu_i\right\}\\
&\leq \sum_{i=1}^k\sup_{f_i\in F_{\lambda_i}}\left\{\int_Kf_id\mu+\int_k\overline{S}_{\lambda_i}f_id\mu_i\right\}\\
& = \sum_{i=1}^kH_i^*(\mu).
\end{align*}
Hence, we get the result.
\end{proof}

Inspired by \cite[Proposition 2.2]{Agueh} we get the following theorem.
\begin{thm}
Let $(X,d)$ be a locally compact, Polish metric space then $\inf(B)=\sup(B^*)$. 
\end{thm}
\begin{proof}
Combining Lemma \ref{L-dual functional} and Lemma \ref{L-sum of dual functions} we obtain that $$\inf(B)=\inf_{\mu\in\cM_c(K)}\sum_{i=1}^kH_i^*(\mu)=-\left(\sum_{i=1}^kH_i^*\right)^*(0)=-H^{**}(0).$$
Furthermore, we also have $\sup(B^*)=-H(0)$. Thus, we only need to prove that $H^{**}(0)=H(0)$. For every $f\in F$, let $f_i\in F_{\lambda_i}$ such that $\sum_{i=1}^kf_i=f$. As $f_i(x)\leq \lambda_ia$ for every $x\in K$ and every $i=1,\ldots,k$, one has $$S_{\lambda_i}f_i(x)=\inf_{y\in K}\left\{\lambda_ib^2d^2(x,y)-f_i(y)\right\}\geq -\lambda_ia \text{ for every }x\in K.$$ Therefore, $H_i\left(f_i\right)\leq \lambda_ia$. Moreover, since $\overline{S}_{\lambda_i}f(x)\leq \lambda_ia$ for every $x\in K$, we also have $H_i(f_i)\geq -\lambda_ia$. Hence $H$ is bounded on $F$. Thanks to Lemma \ref{L-sum of dual functions}, one has $H$ is convex on $F$. We denote by $\mathring{F}$ the interior of $F$, then $\mathring{F}$ is also a convex set. Applying \cite[Lemma 2.1]{Ekeland} we get that $H$ is continuous in $\mathring{F}$ endowed with the supremum norm $\|\cdot\|_\infty$. Observe that $0\in \mathring{F}$, using \cite[Proposition 3.1 and Proposition 4.1]{Ekeland} we obtain that $H^{**}(0)=H(0)$. Hence, we get the result.
\end{proof}
\begin{thm}\label{T-inf equals sup}
Let $(X,d)$ be a Polish metric space then problem $(B^*)$ has solutions.
\end{thm}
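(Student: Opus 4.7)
The plan is to take a maximizing sequence for $(B^*)$ and, using $c$-transforms associated with the cost $c_i(x,y):=\lambda_i b^2 d^2(x,y)$, replace it by an equi-Lipschitz, equi-bounded sequence on the compact set $K$ to which Arzel\`a--Ascoli applies. Let $(f_1^n,\dots,f_k^n)$ be a maximizing sequence with $f_i^n\in F_{\lambda_i}$ and $\sum_i f_i^n=0$. Combining $f_j^n\leq \lambda_j a$ with the constraint gives $-(1-\lambda_i)a\leq f_i^n(x)\leq \lambda_i a$ on $K$, so the sequence is uniformly bounded, and the objective $\sum_i\int_K\overline{S}_{\lambda_i}f_i^n\,d\mu_i$ converges to $\sup(B^*)$.

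For $i=1,\dots,k-1$ I would replace $f_i^n$ by its double $c$-conjugate $g_i^n:=S_{\lambda_i}(S_{\lambda_i}f_i^n)$. The standard $c$-transform identities $S_{\lambda_i}(S_{\lambda_i}f)\geq f$ and $S_{\lambda_i}(S_{\lambda_i}(S_{\lambda_i}f))=S_{\lambda_i}f$ (immediate from the Fenchel-type inequality $S_{\lambda_i}f(x)+f(y)\leq c_i(x,y)$) give $g_i^n\geq f_i^n$ and $\overline{S}_{\lambda_i}g_i^n=\overline{S}_{\lambda_i}f_i^n$, so the $i$-th contribution to the objective is unchanged. Testing at $x=y$ together with the bound $\sup_x S_{\lambda_i}f_i^n(x)\leq -\inf f_i^n\leq (1-\lambda_i)a$ gives $-(1-\lambda_i)a\leq g_i^n\leq \lambda_i a$, so $g_i^n\in F_{\lambda_i}$. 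Since $K$ is compact and $y\mapsto \lambda_i b^2 d^2(x,y)$ is Lipschitz with constant $2\lambda_i b^2\diam(K)$ uniformly in $x$, the infimum defining $g_i^n$ inherits this Lipschitz constant, so $\{g_i^n\}_n$ is equi-Lipschitz and equi-bounded on $K$.

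To restore the equality constraint, I define $h_k^n:=-\sum_{i<k}g_i^n$. By construction the tuple $(g_1^n,\dots,g_{k-1}^n,h_k^n)$ sums to zero; from $g_i^n\geq f_i^n$ one obtains $h_k^n\leq -\sum_{i<k}f_i^n=f_k^n\leq \lambda_k a$, so $h_k^n\in F_{\lambda_k}$. Because $S_{\lambda_k}$ is order-reversing and the clipping $z\mapsto\min(z,\lambda_k a)$ is nondecreasing, the inequality $h_k^n\leq f_k^n$ propagates to $\overline{S}_{\lambda_k}h_k^n\geq \overline{S}_{\lambda_k}f_k^n$, so the new configuration has objective at least as large as the old one and remains a maximizing sequence. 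Being a finite sum of equi-Lipschitz functions, $h_k^n$ is itself equi-Lipschitz; and since each $g_i^n\leq \lambda_i a$ for $i<k$, it is equi-bounded in $[-(1-\lambda_k)a,\lambda_k a]$.

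Arzel\`a--Ascoli on the compact set $K$ now yields a common subsequence along which $g_i^n\to g_i$ uniformly for every $i\leq k-1$ and $h_k^n\to g_k$ uniformly on $K$. The limit satisfies $g_i\in F_{\lambda_i}$ and $\sum_i g_i=0$, so it is admissible; and since $\overline{S}_{\lambda_i}$ is $1$-Lipschitz in the supremum norm while $\mu_i(K)<\infty$, the objective passes to the limit and $(g_1,\dots,g_k)$ attains $\sup(B^*)$. The main obstacle is that the natural regularization $f_i^n\mapsto S_{\lambda_i}^2 f_i^n$ only raises each function and therefore breaks the equality $\sum f_i=0$; the asymmetric fix above---smooth the first $k-1$ coordinates by double $c$-conjugation and absorb the residual into the $k$-th---repairs the constraint without losing objective, and the short monotonicity check for $\overline{S}_{\lambda_k}$ under the clipping with $\lambda_k a$ is the one place that needs a careful case distinction.
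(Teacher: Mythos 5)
Your proposal is correct and follows essentially the same route as the paper: double $c$-conjugation of the first $k-1$ coordinates via $S_{\lambda_i}\circ S_{\lambda_i}$, absorbing the residual into the $k$-th coordinate, using the monotonicity of $S_{\lambda_k}$ and the clipping to keep the sequence maximizing, and concluding by the $2\lambda_i b^2\diam(K)$-Lipschitz bounds plus Arzel\`a--Ascoli. The only (harmless) deviation is at the final limit passage, where you invoke the $1$-Lipschitzness of $\overline{S}_{\lambda_i}$ for the supremum norm instead of the paper's Fatou/limsup argument; both are valid.
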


To prove this theorem, we will prove the following lemma
\begin{lem}\label{L-Lipschitz}
Let $(X,d)$ be a Polish metric space. For every $\lambda >0$, let $f\in F_{\lambda}$ then $S_\lambda f$ and $\left(S_{\lambda}\circ S_{\lambda}\right)f$ are $2\lambda  b^2D$-Lipschitz functions on $K$, where $D=\text{diam}(K)$.
\end{lem}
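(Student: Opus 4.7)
The plan is to derive the Lipschitz estimate directly from the defining infimum, exploiting the fact that the squared distance $x \mapsto d^2(x,y)$ is $2D$-Lipschitz on the bounded set $K$. Concretely, for any $x_1,x_2,y\in K$ the identity
$$d^2(x_1,y)-d^2(x_2,y)=(d(x_1,y)-d(x_2,y))(d(x_1,y)+d(x_2,y))$$
combined with the triangle inequality $|d(x_1,y)-d(x_2,y)|\le d(x_1,x_2)$ and the diameter bound $d(x_i,y)\le D$ yields $|\lambda b^2 d^2(x_1,y)-\lambda b^2 d^2(x_2,y)|\le 2\lambda b^2 D\, d(x_1,x_2)$ uniformly in $y\in K$.

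Given this uniform pointwise estimate, the first conclusion follows immediately by subtracting $f(y)$ from both sides, taking the infimum over $y\in K$, and then swapping the roles of $x_1$ and $x_2$: one obtains $|S_\lambda f(x_1)-S_\lambda f(x_2)|\le 2\lambda b^2 D\, d(x_1,x_2)$. In particular, $S_\lambda f$ is continuous and (since $K$ is compact) bounded, with $S_\lambda f\le -f$ on $K$ taking $y=x$ in the infimum, and $S_\lambda f\ge -\sup f$ taking the trivial lower bound.

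For $(S_\lambda\circ S_\lambda)f$, the crucial observation is that the uniform pointwise estimate above used no property of $f$ beyond boundedness, which was needed only so that the infimum defining $S_\lambda f$ is finite. Since $S_\lambda f$ has just been shown to be bounded and continuous on $K$, applying $S_\lambda$ to it is legitimate, and repeating verbatim the argument of the previous paragraph with $f$ replaced by $S_\lambda f$ yields $|(S_\lambda\circ S_\lambda)f(x_1)-(S_\lambda\circ S_\lambda)f(x_2)|\le 2\lambda b^2 D\, d(x_1,x_2)$. I expect no genuine obstacle; the only points requiring care are the factorization of the difference of squared distances and the observation that the bound $2D$ on the sum $d(x_1,y)+d(x_2,y)$ rests on the restriction to the compact set $K$ of diameter $D$.
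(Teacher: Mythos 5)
Your proof is correct and follows essentially the same route as the paper: both arguments rest on the factorization $d^2(x_1,y)-d^2(x_2,y)=(d(x_1,y)-d(x_2,y))(d(x_1,y)+d(x_2,y))\le 2D\,d(x_1,x_2)$ uniformly in $y\in K$, followed by passing to the infimum (the paper does this via an $\varepsilon$-almost-minimizer $y_0$, which is the same estimate in different packaging) and then repeating the argument for the second application of $S_\lambda$. No gap.
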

\begin{proof}
As $K$  is a compact subset of $X$ then $K$ is bounded and thus $D=\text{diam}(K)<\infty$. Let any $x_1,x_2\in K$. For every $\varepsilon>0$, there exists $y_0\in K$ such that $S_\lambda f \left(x_2\right)\geq \lambda b^2d^2\left(x_2,y_0\right)-f\left(y_0\right)-\varepsilon$. Moreover, it is clear that $S_\lambda f \left(x_1\right)\leq \lambda b^2d^2\left(x_1,y_0\right)-f\left(y_0\right)$. Hence, we get that \begin{align*}
S_\lambda f \left(x_1\right)-S_\lambda f \left(x_2\right)&\leq \lambda b^2\left[d^2\left(x_1,y_0\right)-d^2\left(x_2,y_0\right)\right]+\varepsilon\\
& \leq \lambda b^2 d\left(x_1,x_2\right)\left[d\left(x_1,y_0\right)+d\left(x_2,y_0\right)\right]+\varepsilon\\
&\leq 2\lambda b^2D d\left(x_1,x_2\right)+\varepsilon.
\end{align*}
Similarly, $S_\lambda f \left(x_2\right)-S_\lambda f \left(x_1\right)\leq 2\lambda b^2D d\left(x_1,x_2\right)+\varepsilon$. Therefore, $S_{\lambda}f$ is a $2\lambda b^2 D$-Lipschitz function. By the same arguments above, we also get that $\left(S_{\lambda}\circ S_{\lambda}\right)f$ is a $2\lambda b^2 D$-Lipschitz function.  
\end{proof}
\begin{proof}[Proof of Theorem \ref{T-inf equals sup}] 
Let $f^n=\left(f_1^n,\ldots,f_k^n\right)$ be a maximizing sequence for $(B^*)$. For each $i\in\{1,\ldots,k-1\}$ we define $\widetilde{f}^n_i:=\left(S_{\lambda_i}\circ S_{\lambda_i}\right)f^n_i$. Then $\widetilde{f}^n_i$ is bounded on $K$ for every $i=1,\ldots,k-1$. Since $S_{\lambda_i}f_i(x)\geq -\lambda_ia$ for every $x\in K$ and every $i=1,\ldots,k-1$, we get $\widetilde{f}^n_i(x)=\inf_{y\in K}\left\{\lambda_ib^2d^2(x,y)-S_{\lambda_i}f_i^n(y)\right\}\leq -S_{\lambda_i}f_i^n(x)\leq \lambda_ia$ for every $x\in K.$

Moreover, it is easy to see that $f_i^n\leq \widetilde{f}_i^n$ on $K$ and $S_{\lambda_i}\widetilde{f}^n_i=S_{\lambda_i}f^n_i$ for every $i=1,\ldots,k-1$. Hence $\overline{S}_{\lambda_i}\widetilde{f}^n_i=\overline{S}_{\lambda_i}f^n_i$ for every $i=1,\ldots,k-1$. For every $n\in \N$, we define $\widetilde{f}^n_k:=-\sum_{i=1}^{k-1}\widetilde{f}^n_i$. As $f_i^n\leq \widetilde{f}_i^n$ on $K$, one has $\widetilde{f}_k^n\leq -\sum_{i=1}^{k-1}f_i^n=f_p^n$. Thus, $\widetilde{f}_k^n(x)\leq \lambda_ka$ for every $x\in K$ and $S_{\lambda_k}\widetilde{f}_k^n\geq S_{\lambda_k}f^n_k$ for every $n\in \N$. Thus, $\overline{S}_{\lambda_k}\widetilde{f}_k^n\geq \overline{S}_{\lambda_k}f^n_k$ for every $n\in \N$. Therefore, we obtain that $$\limsup_{n\rightarrow \infty}\sum_{i=1}^k\int_K \overline{S}_{\lambda_i}\widetilde{f}_i^n(x)d\mu_i(x)\geq \lim_{n\rightarrow\infty}\sum_{i=1}^k\int_K\overline{S}_{\lambda_i}f_i^n(x)d\mu_i(x)=\sup(B^*).$$

Using Lemma \ref{L-Lipschitz} we get that $\widetilde{f}_i^n$ is a $2\lambda_i b^2D$-Lipschitz function on $K$ for every $i=1,\ldots,k-1$ and every $n\in\N$. As $\widetilde{f}^n_k:=-\sum_{i=1}^{k-1}\widetilde{f}^n_i$ and $\sum_{i=1}^k\lambda_i=1$, we obtain that $\widetilde{f}_k^n$ is a $2\left(1-\lambda_k\right)b^2D$-Lipschitz function on $K$. Then applying Ascoli-Arzela Theorem on compact set $K$ and using a standard diagonal argument there exists a subsequence of $\widetilde{f}^n=\left(\widetilde{f}_1^n,\ldots,\widetilde{f}_k^n\right)$ which we still denote by $\left\{\widetilde{f}^n\right\}$ such that $\widetilde{f}^n$ converges uniformly to $\widetilde{f}=\left(\widetilde{f}_1,\ldots,\widetilde{f}_k\right)$. Then $\widetilde{f}_i\in F_{\lambda_i}$ for every $i\in\{1,\ldots,k\}$. As $\sum_{i=1}^k\widetilde{f}_i^n=0$ for every $n\in \mathbb{N}$, we get that $\sum_{i=1}^k\widetilde{f}_i=0$. This yields,
\begin{align*}
\sum_{i=1}^k\int_K\overline{S}_{\lambda_i}\widetilde{f}_i(x)d\mu_i(x)&\leq \sup (B^*)\leq\sum_{i=1}^k \limsup_{n\rightarrow \infty}\int_K \overline{S}_{\lambda_i}\widetilde{f}_i^n(x)d\mu_i(x).
\end{align*}

Applying Fatou Lemma, we obtain that \begin{align*}
\sum_{i=1}^k\int_K\overline{S}_{\lambda_i}\widetilde{f}_i(x)d\mu_i(x) & \leq\sum_{i=1}^k \int_K\limsup_{n\rightarrow \infty} \overline{S}_{\lambda_i}\widetilde{f}_i^n(x)d\mu_i(x)\\
& =\sum_{i=1}^k \int_K\limsup_{n\rightarrow \infty}\left[\min\left\{ \inf_{y\in K}\left\{\lambda_ib^2d^2(x,y)-\widetilde{f}_i^n(y)\right\},\lambda_ia\right\}\right] d\mu_i(x)\\
&\leq \sum_{i=1}^k \int_K\min\left\{\inf_{y\in K}\left\{ \limsup_{n\rightarrow \infty}\left(\lambda_ib^2d^2(x,y)-\widetilde{f}_i^n(y)\right)\right\},\lambda_ia\right\} d\mu_i(x)\\
& = \sum_{i=1}^k \int_K\min\left\{\inf_{y\in K}\left\{\lambda_ib^2d^2(x,y)-\widetilde{f}_i(y)\right\},\lambda_ia\right\} d\mu_i(x)\\
& = \sum_{i=1}^k\int_K\overline{S}_{\lambda_i}\widetilde{f}_i(x)d\mu_i(x).
\end{align*}
Therefore, we must have equality everywhere. Hence, we get the result.
\end{proof}
\begin{bibdiv}
\begin{biblist}

\bib{Agueh}{article}{
	author={Agueh, Martial},
	author={Carlier, Guillaume},
	title={Barycenters in the Wasserstein space},
	journal={SIAM J. Math. Anal.},
	volume={43},
	date={2011},
	number={2},
	pages={904--924}
}
\bib{ABC}{article}{
				author={Alibert, J.-J.},
				author={Bouchitt\'{e}, G.},
				author={Champion, T.},
				title={A new class of costs for optimal transport planning},
				journal={European J. Appl. Math.},
				volume={30},
				date={2019},
				number={6},
				pages={1229--1263},
				
			}
\bib{AGS}{book}{
				author={Ambrosio, Luigi},
				author={Gigli, Nicola},
				author={Savar\'{e}, Giuseppe},
				title={Gradient flows in metric spaces and in the space of probability
					measures},
				series={Lectures in Mathematics ETH Z\"{u}rich},
				publisher={Birkh\"{a}user Verlag, Basel},
				date={2005},
			}		
\bib{BBP}{article}{
				author={Backhoff-Veraguas, J.},
				author={Beiglb\"{o}ck, M.},
				author={Pammer, G.},
				title={Existence, duality, and cyclical monotonicity for weak transport
					costs},
				journal={Calc. Var. Partial Differential Equations},
				volume={58},
				date={2019},
				number={6},
				pages={Paper No. 203, 28},
				
			}
\bib{Basso}{article}{
   author={Basso, Giuliano },
   title={A Hitchhiker's guide to Wasserstein distances},
   status={http://n.ethz.ch/~gbasso/},
   }
\bib{BBP}{article}{
				author={Backhoff-Veraguas, J.},
				author={Beiglb\"{o}ck, M.},
				author={Pammer, G.},
				title={Existence, duality, and cyclical monotonicity for weak transport
					costs},
				journal={Calc. Var. Partial Differential Equations},
				volume={58},
				date={2019},
				number={6},
				pages={Paper No. 203, 28},
				
			}   
\bib{Boissard}{article}{
   author={Boissard, Emmanuel},
   author={Le Gouic, Thibaut},
   author={Loubes, Jean-Michel},
   title={Distribution's template estimate with Wasserstein metrics},
   journal={Bernoulli},
   volume={21},
   date={2015},
   number={2},
   pages={740--759},
   
}
\bib{MR3423268}{article}{
				author={Carlier, Guillaume},
				author={Oberman, Adam},
				author={Oudet, Edouard},
				title={Numerical methods for matching for teams and Wasserstein
					barycenters},
				journal={ESAIM Math. Model. Numer. Anal.},
				volume={49},
				date={2015},
				number={6},
				pages={1621--1642},
				
			}
\bib{CM}{article}{
   author={Caffarelli, Luis A.},
   author={McCann, Robert J.},
   title={Free boundaries in optimal transport and Monge-Amp\`ere obstacle
   problems},
   journal={Ann. of Math. (2)},
   volume={171},
   date={2010},
   number={2},
   pages={673--730},
}	
\bib{CPSV}{article}{
   author={Chizat, L\'{e}na\"{\i}c},
   author={Peyr\'{e}, Gabriel},
   author={Schmitzer, Bernhard},
   author={Vialard, Fran\c{c}ois-Xavier},
   title={Unbalanced optimal transport: dynamic and Kantorovich
   formulations},
   journal={J. Funct. Anal.},
   volume={274},
   date={2018},
   number={11},
   pages={3090--3123},
  
}		
\bib{CP}{article}{
				author={Chung, Nhan-Phu},
				author={Phung, Minh-Nhat}
				title={Barycenters in the Hellinger-Kantorovich space},
				status={to appear, Applied Mathematics and Optimization},
			}		
\bib{CT}{article}{
   author={Chung, Nhan-Phu},
   author={Trinh, Thanh-Son}
   title={Duality and quotients spaces of generalized Wasserstein spaces},
   status={arXiv:1904.12461},
   }
\bib{CT3}{article}{
   author={Chung, Nhan-Phu},
   author={Trinh, Thanh-Son}
   title={Weak optimal entropy transport problems},
   status={	arXiv:2101.04986},
   }   
\bib{Ekeland}{book}{
   author={Ekeland, Ivar},
   author={T\'{e}mam, Roger},
   title={Convex analysis and variational problems},
   series={Classics in Applied Mathematics},
   volume={28},
   edition={Corrected reprint of the 1976 English edition},
   note={Translated from the French},
   publisher={Society for Industrial and Applied Mathematics (SIAM),
   Philadelphia, PA},
   date={1999},
}
\bib{FMS}{article}{
   author={Friesecke, Gero},
   author={Matthes, Daniel},
   author={Schmitzer, Bernhard},
   title={Barycenters for the Hellinger-Kantorovich distance over $\R^d$},
   journal={SIAM J. Math. Anal.},
   volume={53},
   date={2021},
   number={1},
   pages={62--110},
   }
\bib{GRST}{article}{
				author={Gozlan, Nathael},
				author={Roberto, Cyril},
				author={Samson, Paul-Marie},
				author={Tetali, Prasad},
				title={Kantorovich duality for general transport costs and applications},
				journal={J. Funct. Anal.},
				volume={273},
				date={2017},
				number={11},
				pages={3327--3405},
			}  
\bib{Hanin1}{article}{
   author={Hanin, Leonid G.},
   title={Kantorovich-Rubinstein norm and its application in the theory of
   Lipschitz spaces},
   journal={Proc. Amer. Math. Soc.},
   volume={115},
   date={1992},
   number={2},
   pages={345--352},
   }
\bib{Hanin2}{article}{
   author={Hanin, Leonid G.},
   title={An extension of the Kantorovich norm},
   conference={
      title={Monge Amp\`ere equation: applications to geometry and
      optimization},
      address={Deerfield Beach, FL},
      date={1997},
   },
   book={
      series={Contemp. Math.},
      volume={226},
      publisher={Amer. Math. Soc., Providence, RI},
   },
   date={1999},
   pages={113--130},
  
}
\bib{Kant42}{article}{
   author={Kantorovitch, L.},
   title={On the translocation of masses},
   journal={C. R. (Doklady) Acad. Sci. URSS (N.S.)},
   volume={37},
   date={1942},
   pages={199--201},
  }
\bib{Kant48}{article}{
   author={Kantorovich, L. V.},
   title={On a problem of Monge},
   language={Russian},
   journal={Zap. Nauchn. Sem. S.-Peterburg. Otdel. Mat. Inst. Steklov.
   (POMI)},
   volume={312},
   date={2004},
   number={Teor. Predst. Din. Sist. Komb. i Algoritm. Metody. 11},
   pages={15--16},
   issn={0373-2703},
   translation={
      journal={J. Math. Sci. (N.Y.)},
      volume={133},
      date={2006},
      number={4},
      pages={1383},
      issn={1072-3374},
   },
  
}  
\bib{MR3590527}{article}{
				author={Kim, Young-Heon},
				author={Pass, Brendan},
				title={Wasserstein barycenters over Riemannian manifolds},
				journal={Adv. Math.},
				volume={307},
				date={2017},
				pages={640--683}
			}
\bib{KP}{article}{
   author={Kitagawa, Jun},
   author={Pass, Brendan},
   title={The multi-marginal optimal partial transport problem},
   journal={Forum Math. Sigma},
   volume={3},
   date={2015},
   pages={Paper No. e17, 28},
}   
\bib{KMV}{article}{
   author={Kondratyev, Stanislav},
   author={Monsaingeon, L\'{e}onard},
   author={Vorotnikov, Dmitry},
   title={A new optimal transport distance on the space of finite Radon
   measures},
   journal={Adv. Differential Equations},
   volume={21},
   date={2016},
   number={11-12},
   pages={1117--1164},
   
}
	
			\bib{MR3663634}{article}{
				author={Le Gouic, Thibaut},
				author={Loubes, Jean-Michel},
				title={Existence and consistency of Wasserstein barycenters},
				journal={Probab. Theory Related Fields},
				volume={168},
				date={2017},
				number={3-4},
				pages={901--917}
			}
   \bib{Liero}{article}{
   author={Liero, Matthias},
   author={Mielke, Alexander},
   author={Savar\'{e}, Giuseppe},
   title={Optimal entropy-transport problems and a new Hellinger-Kantorovich
   distance between positive measures},
   journal={Invent. Math.},
   volume={211},
   date={2018},
   number={3},
   pages={969--1117},
  
}
\bib{LV}{article}{
   author={Lott, John},
   author={Villani, C\'{e}dric},
   title={Ricci curvature for metric-measure spaces via optimal transport},
   journal={Ann. of Math. (2)},
   volume={169},
   date={2009},
   number={3},
   pages={903--991}, 
}  
\bib{Par}{book}{
   author={Parthasarathy, K. R.},
   title={Probability measures on metric spaces},
   note={Reprint of the 1967 original},
   publisher={AMS Chelsea Publishing, Providence, RI},
   date={2005},
  
}   
\bib{PR14}{article}{
   author={Piccoli, Benedetto},
   author={Rossi, Francesco},
   title={Generalized Wasserstein distance and its application to transport
   equations with source},
   journal={Arch. Ration. Mech. Anal.},
   volume={211},
   date={2014},
   number={1},
   pages={335--358},  
}
\bib{PR16}{article}{
   author={Piccoli, Benedetto},
   author={Rossi, Francesco},
   title={On properties of the generalized Wasserstein distance},
   journal={Arch. Ration. Mech. Anal.},
   volume={222},
   date={2016},
   number={3},
   pages={1339--1365},
   
}
\bib{PRT}{article}{
   author={Piccoli, Benedetto},
   author={Rossi, Francesco},
   author={Tournus, Magali},
   title={A Wasserstein norm for signed measures, with application to non local transport equation with source term},
   status={hal-01665244v3},
   
}
\bib{MR3469435}{article}{
				author={Rabin, Julien},
				author={Peyr\'{e}, Gabriel },
				author={Delon, Julie },
				author={Bernot, Marc},
				title={Wasserstein Barycenter and Its Application to Texture Mixing},
				conference={
					Scale Space and Variational Methods in Computer Vision},
				date={2011},
				book={
					series={Lecture Notes in Computer Science},
					volume={6667},
					publisher={Springer, Berlin, Heidelberg},
				},
				date={2012},
				pages={435--446},
			}

\bib{Rudin}{book}{
   author={Rudin, Walter},
   title={Real and complex analysis},
   edition={3},
   publisher={McGraw-Hill Book Co., New York},
   date={1987},
   
}
\bib{MR3862415}{article}{
				author={Srivastava, Sanvesh},
				author={Li, Cheng},
				author={Dunson, David B.},
				title={Scalable Bayes via barycenter in Wasserstein space},
				journal={J. Mach. Learn. Res.},
				volume={19},
				date={2018},
				pages={Paper No. 8, 35},
				
			}	
\bib{Sturm}{article}{
   author={Sturm, Karl-Theodor},
   title={Probability measures on metric spaces of nonpositive curvature},
   conference={
      title={Heat kernels and analysis on manifolds, graphs, and metric
      spaces},
      address={Paris},
      date={2002},
   },
   book={
      series={Contemp. Math.},
      volume={338},
      publisher={Amer. Math. Soc., Providence, RI},
   },
   date={2003},
   pages={357--390},
  
}
\bib{V03}{book}{
   author={Villani, C\'{e}dric},
   title={Topics in optimal transportation},
   series={Graduate Studies in Mathematics},
   volume={58},
   publisher={American Mathematical Society, Providence, RI},
   date={2003},
  
}  
\bib{V09}{book}{
   author={Villani, C\'{e}dric},
   title={Optimal transport},
   series={Grundlehren der Mathematischen Wissenschaften [Fundamental
   Principles of Mathematical Sciences]},
   volume={338},
   note={Old and new},
   publisher={Springer-Verlag, Berlin},
   date={2009},
  }
\end{biblist}
\end{bibdiv}
\end{document}